\newcommand{\noun}[1]{\textsc{#1}}
\numberwithin{equation}{section}
\numberwithin{figure}{section}
\theoremstyle{plain}
\newtheorem{thm}{\protect\theoremname}
  \theoremstyle{plain}
  \newtheorem{lem}[thm]{\protect\lemmaname}
  \theoremstyle{definition}
  \newtheorem{defn}[thm]{\protect\definitionname}
  \theoremstyle{plain}
  \newtheorem{prop}[thm]{\protect\propositionname}
  \theoremstyle{remark}
  \newtheorem*{rem*}{\protect\remarkname}
  \theoremstyle{plain}
  \newtheorem{cor}[thm]{\protect\corollaryname}
  \theoremstyle{definition}
  \newtheorem{problem}[thm]{\protect\problemname}
  \providecommand{\corollaryname}{Corollary}
  \providecommand{\definitionname}{Definition}
  \providecommand{\lemmaname}{Lemma}
  \providecommand{\problemname}{Problem}
  \providecommand{\propositionname}{Proposition}
  \providecommand{\remarkname}{Remark}
\providecommand{\theoremname}{Theorem}
\begin{document}

\title[Symmetric orthogonality and non-expansive projections]{Symmetric orthogonality and non-expansive projections in metric spaces}

\author{Martin Kell}

\address{Mathematisches Institut, Universität Tübingen, Tübingen, Germany}

\email{martin.kell@math.uni-tuebingen.de}
\begin{abstract}
In this paper known results of symmetric orthogonality, as introduced
by G. Birkhoff, and non-expansive nearest point projections are extended
from the linear to the metric setting. If the space has non-positive
curvature in the sense Busemann then it is shown that those concepts
are actually equivalent. In the end it is shown that every space having
non-positive curvature in the sense of Busemann is a $CAT(0)$-space
provided that its tangent cones are uniquely geodesic and their nearest
point projections onto convex are non-expansive. 
\end{abstract}

\maketitle
\global\long\def\Geo{\operatorname{Geo}}
\global\long\def\conv{\operatorname{conv}}

Orthogonality in the Euclidean setting can be described either by
an angle condition or by a nearest point projection property using
Pythagoras' theorem. More precisely, one says a geodesic $\gamma$
intersects a geodesic $\eta$ orthogonally if the intersection point
$p$ is the closest point of $\gamma_{1}$ on $\eta$. We denote this
by $\gamma\bot_{p}\eta$. It is not difficult to see that in the Euclidean
setting $\gamma$ intersects $\eta$ orthogonally if and only if $\eta$
intersects $\gamma$ orthogonality. Hence Euclidean spaces are said
to have the \emph{symmetric orthogonality property} $(SO)$. 

For general normed spaces the orthogonality condition in terms of
projections appeared the first time in Birkhoff's work \cite[Theorem 2]{Birkhoff1935}.
He showed that the only higher dimensional Banach spaces having symmetric
orthogonalities are Hilbert spaces. Later the symmetric orthogonality
property appeared again in the setting of Hilbert geometries \cite{Kelly1952}
where it was called symmetric perpendicularity. More recently, it
appeared in the metric setting under the name of property $(B)$ in
\cite{Kuwae2013}.

An essential ingredient to obtain the non-expansive behavior for gradient
flows of convex functionals in spaces having non-positive curvature
in the sense of Alexandrov is the fact that nearest point projections
onto convex sets are themselves non-expansive. This result is well-known
for Hilbert spaces. In fact, Kakutani \cite[Therem 3]{Kakutani1939}
showed that Hilbert spaces are the only higher dimensional Banach
spaces having \emph{non-expansive projections} onto convex sets. This
result was rediscovered later by Phelps \cite[Theorem 5.2]{Phelps1957}.
Subsequently, it was generalized to Hadamard manifolds and $CAT(0)$-spaces,
see bibliographic remarks of \cite[Chapter 2 \& 5]{Bacak2014a}. 

The non-expansive projection property $(NE)$ can be also used to
prove a \emph{convexity principle}, a kind of generalized maximum
principle for harmonic maps into non-positively curved spaces. More
precisely, if $h:M\to N$ is harmonic, i.e. a local minimizer of the
Dirichlet energy, then $h(M)$ is contained in the closed convex hull
of $h(\partial M)$. Indeed, this follows from the non-expansive projection
property and the fact that the Dirichlet energy $E$ satisfies $E(\pi_{C}\circ f)\le E(f)$
whenever $f\big|_{\partial M}=h\big|_{\partial M}$, see \cite[Chapter 8]{Jost1994}.
Here $\pi_{C}$ denotes the nearest point projection onto the convex
hull $C$, i.e. 
\[
\pi_{C}(z)=\{z_{C}\in C\,|\,d(z,z_{C})=\inf_{z'\in C}d(z,z')\}.
\]
Using projection one readily verifies that $\gamma\bot_{p}\eta$ holds
whenever $\gamma$ and $\eta$ intersect in $p$ and satisfy $p\in\pi_{\eta}(\gamma_{1})$. 

In this note we show that the concepts of non-expansive projections
$(NE)$ and symmetric orthogonalities $(SO)$ are strongly related.
Indeed, Proposition \ref{prop:CPtoSO} shows that having non-expansive
projections implies the symmetric orthogonality property. Note that
the opposite is wrong as can be observed on a small convex domain
of the sphere $\mathbb{S}^{n}$. 

Assuming a form of non-positive curvature assumption introduced by
Busemann we are able to prove the following equivalence between the
two conditions.
\begin{thm}
[see Lemma \ref{lem:SOiffA} and Theorem \ref{thm:NEiffSO}]\label{thm:Main1-SOeqNE}Assume
$(M,d)$ is a complete Busemann convex geodesic space, i.e. for all
geodesics $\gamma$ and $\eta$ with $\gamma_{0}=\eta_{0}$ it holds
\[
d(\gamma_{\frac{1}{2}},\eta_{\frac{1}{2}})\le\frac{1}{2}d(\gamma_{1},\eta_{1}).
\]
Then the following properties are equivalent:
\begin{itemize}
\item \noun{(Property $(SO)$)} For all geodesics $\gamma$ and $\eta$
with $\gamma_{0}=\eta_{0}$ it holds 
\[
\gamma\bot_{\gamma_{0}}\eta\,\Longleftrightarrow\,\eta\bot_{\gamma_{0}}\gamma.
\]
\item \noun{(Property $(A)$)} Whenever $\gamma$ is geodesic and $x\in M$
then for all $t\in[0,1]$ and $x_{C}\in\pi_{\gamma}(x)$ it holds
\[
d(x_{C},\gamma_{t})\le d(x,\gamma_{t}).
\]
\item \noun{(Property $(NE)$)} Whenever $C$ is a closed convex set and
$x,y\in M$ then for all $x_{C}\in\pi_{C}(x)$ and $y_{C}\in\pi_{C}(y)$
it holds 
\[
d(x_{C},y_{C})\le d(x,y).
\]
 
\end{itemize}
\end{thm}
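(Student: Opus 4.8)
The plan is to prove the four implications $(SO)\Rightarrow(A)$, $(A)\Rightarrow(SO)$, $(NE)\Rightarrow(A)$ and $(A)\Rightarrow(NE)$; the first two give Lemma~\ref{lem:SOiffA} and require no curvature hypothesis, while among the last two only $(A)\Rightarrow(NE)$ uses Busemann convexity. Throughout I would use that geodesics are affinely parametrised, that a sub-arc of a geodesic is a geodesic, and -- once Busemann convexity is in force -- that geodesics between prescribed endpoints are unique, geodesic segments are convex, $t\mapsto d(\gamma_t,\eta_t)$ is convex for any two geodesics $\gamma,\eta\colon[0,1]\to M$ (hence so is $t\mapsto d(\gamma_t,p)$), and $\pi_C$ is non-empty on closed convex sets.

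For $(SO)\Rightarrow(A)$: given a geodesic $\gamma$, a point $x$, a projection $x_C\in\pi_\gamma(x)$ and a parameter $t$, set $\mu:=[x_C,x]$ and let $\gamma'$ be the sub-arc of $\gamma$ from $x_C$ to $\gamma_t$. Since $x_C$ is a closest point of all of $\gamma$ to $x$, it is a closest point of $\gamma'$ to $x=\mu_1$, i.e.\ $\mu\bot_{x_C}\gamma'$; by $(SO)$ then $\gamma'\bot_{x_C}\mu$, i.e.\ $x_C$ is a closest point of $\mu$ to $\gamma'_1=\gamma_t$, and evaluating at $\mu_1=x$ gives $d(x_C,\gamma_t)\le d(x,\gamma_t)$. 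For $(A)\Rightarrow(SO)$: assuming $\gamma\bot_{\gamma_0}\eta$, i.e.\ $d(\gamma_1,\gamma_0)\le d(\gamma_1,\eta_t)$ for all $t$, the triangle inequality and affine parametrisation give, for every $s$,
\[
d(\gamma_s,\eta_t)\ \ge\ d(\gamma_1,\eta_t)-d(\gamma_1,\gamma_s)\ \ge\ d(\gamma_1,\gamma_0)-d(\gamma_1,\gamma_s)\ =\ d(\gamma_0,\gamma_s),
\]
so $\gamma_0\in\pi_\eta(\gamma_s)$; property $(A)$ applied to $\eta$, the point $\gamma_s$ and the projection $\gamma_0$ yields $d(\gamma_0,\eta_t)\le d(\gamma_s,\eta_t)$ for all $t$, which at $t=1$ reads $d(\gamma_0,\eta_1)\le d(\gamma_s,\eta_1)$; since $s$ is arbitrary, $\gamma_0\in\pi_\gamma(\eta_1)$, i.e.\ $\eta\bot_{\gamma_0}\gamma$, and the converse follows by exchanging $\gamma,\eta$. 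Finally $(NE)\Rightarrow(A)$ is immediate: apply $(NE)$ with $C$ the image of $\gamma$ (closed and convex) and $y:=\gamma_t\in C$.

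The substance is $(A)\Rightarrow(NE)$. Given closed convex $C$ and $x_C\in\pi_C(x)$, $y_C\in\pi_C(y)$, set $\sigma:=[x_C,y_C]\subseteq C$; as $\sigma\subseteq C$, $x_C$ and $y_C$ are closest points of $\sigma$ to $x$ resp.\ $y$, so it suffices to prove: if a geodesic $\sigma\colon[0,1]\to M$ satisfies $\sigma_0\in\pi_\sigma(x)$ and $\sigma_1\in\pi_\sigma(y)$, then $D:=d(\sigma_0,\sigma_1)\le d(x,y)$. Write $\mu:=[\sigma_0,x]$, $\nu:=[\sigma_1,y]$ on $[0,1]$. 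Since $\sigma_0\in\pi_\sigma(x)$ says $\mu\bot_{\sigma_0}\sigma$, the equivalence $(SO)\Leftrightarrow(A)$ just proved gives $\sigma\bot_{\sigma_0}\mu$, i.e.\ $\sigma_0$ is a closest point of $\mu$ to $\sigma_1$, whence $d(\mu_t,\sigma_1)\ge D$ for all $t$; symmetrically $d(\nu_t,\sigma_0)\ge D$. (Applying $(A)$ to $\sigma$ with the points $\mu_t,\nu_t$ -- noting $\sigma_0\in\pi_\sigma(\mu_t)$ because a point of $\sigma$ nearer to $\mu_t$ would, as $\mu_t\in[\sigma_0,x]$, be nearer to $x$ -- yields moreover the finer bounds $d(\mu_t,\sigma_r)\ge rD$ and $d(\nu_t,\sigma_r)\ge(1-r)D$ for all $t,r$.) Busemann convexity now makes $g(t):=d(\mu_t,\nu_t)$ convex with $g(0)=D$ and $g(1)=d(x,y)$; likewise $t\mapsto d(\mu_t,\sigma_1)$ and $t\mapsto d(\nu_t,\sigma_0)$ are convex, equal to $D$ at $0$ and $\ge D$, hence non-decreasing with non-negative right derivatives at $0$. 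It then suffices to show $g'_+(0)\ge 0$ (for then $g$ is non-decreasing and $d(x,y)=g(1)\ge g(0)=D$), and the route I would take is a first-variation argument bounding $g'_+(0)$ below by the sum of those two non-negative one-sided derivatives.

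I expect exactly this last point -- extracting $g'_+(0)\ge 0$ from the ``non-acute'' behaviour of $\mu,\nu$ along $\sigma$ (supplied by $(A)$) together with Busemann convexity -- to be the main obstacle, and the reason $(A)$/$(SO)$ is genuinely needed beyond Busemann convexity. In a $CAT(0)$ space this is routine: the first variation of $d(\mu_t,\nu_t)$ splits additively into the two one-sided derivatives above, each $\ge 0$ because $\sigma_0=\pi_C(x)$, $\sigma_1=\pi_C(y)$ force the relevant angles to be $\ge\pi/2$. But in a merely Busemann convex space Alexandrov angles may fail to exist and the $CAT(0)$ Pythagoras/reverse-Cauchy--Schwarz inequality is unavailable, so this additive splitting -- and with it the whole implication -- must be re-established by a careful limiting/comparison-angle argument (exploiting that $\widetilde\angle_{\sigma_0}(\mu_t,\sigma_1)\to\pi/2$ as $t\to 0$, etc.), which is where property $(A)$ does the work. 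The other three implications, by contrast, are short and purely metric.
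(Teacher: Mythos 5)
Your treatment of $(SO)\Leftrightarrow(A)$ coincides with the paper's Lemma \ref{lem:SOiffA} (and needs no curvature hypothesis), and $(NE)\Rightarrow(A)$ is indeed immediate; the reduction of $(A)\Rightarrow(NE)$ to the segment $\sigma=[x_C,y_C]$ is also legitimate. The problem is that the one implication carrying all the content of the theorem is not actually proved. Your plan is to show $g'_+(0)\ge 0$ for $g(t)=d(\mu_t,\nu_t)$ by splitting the first variation additively into $\partial_t^+ d(\mu_t,\sigma_1)|_{t=0}+\partial_t^+ d(\nu_t,\sigma_0)|_{t=0}$, each of which is $\ge 0$ by the projection conditions. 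That splitting is exactly what is unavailable here: it is a first-variation identity that holds in smooth normed or $CAT(0)$ settings, but a general Busemann convex space has no first variation formula, no well-behaved (comparison) angles, and not even an obvious superadditivity inequality relating $g'_+(0)$ to the two one-sided derivatives along $\sigma$. Your suggested repair via $\widetilde\angle_{\sigma_0}(\mu_t,\sigma_1)\to\pi/2$ is a $CAT(0)$-type monotonicity statement with no analogue in this generality. You acknowledge this yourself (``I expect exactly this last point\dots to be the main obstacle\dots must be re-established''), which means the proposal is a plan with its hardest step missing rather than a proof.

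For comparison, the paper's proof of Theorem \ref{thm:NEiffSO} avoids any infinitesimal argument. It first uses property $(A)$ applied to the tube $C_m=\bar B_m(C)$ (convex by Busemann convexity) to reduce to the case $d(x,C)=d(y,C)=m$; it then follows the geodesics $t\mapsto x_t$, $t\mapsto y_t$ from $x_C,y_C$ out to $x,y$, connects $x_t$ to $y_t$ by geodesics $\gamma^{(t)}$, and uses convexity of $s\mapsto d(\gamma^{(t)}_s,\gamma^{(0)}_s)$ to control $d(\gamma^{(t)}_s,C)$. The degenerate case of equality is handled by Busemann's affine rigidity (Lemma \ref{lem:Busemann-convex-affine-rigidity}: a flat strip isometric to a convex set in a strictly convex normed plane); otherwise the midpoint $z=\gamma^{(1)}_{1/2}$ is strictly closer to $C$, and applying the reduction to the pairs $(z,x)$ and $(z,y)$ plus the triangle inequality yields $d(x_{m_0},y_{m_0})\le d(x,y)$ with $m_0<m$, after which one iterates and passes to the limit $m_n\to 0$ to reach $(x_C,y_C)$. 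If you want to complete your write-up, you will either need to supply a genuinely new proof of the additive splitting under hypothesis $(A)$, or switch to a global scheme of this kind.
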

In the following $M\times_{2}\mathbb{R}$ denotes the metric space
$(M\times\mathbb{R},\tilde{d})$ equipped with the metric $\tilde{d}((x,t),(y,s))^{2}=d(x,y)^{2}+|t-s|^{2}$.
Though the properties of the theorem above suggest that the space
in question has well-defined angles, this turns out to be wrong. Indeed,
Birkhoff showed there is an abundance of strictly convex norms on
$\mathbb{R}^{2}$ having symmetric orthogonalities \cite{Birkhoff1935}. 

Observe, however, that whenever $\mathbb{R}^{2}\times_{2}\mathbb{R}$
has symmetric orthogonalities for a normed space $(\mathbb{R}^{2},\|\cdot\|)$
then by Birkhoff's result $\mathbb{R}^{2}\times_{2}\mathbb{R}$ and
thus $\mathbb{R}^{2}$ must be Euclidean. Hence the symmetric orthogonality
property is not stable under taking products. If we can ensure that
tangent spaces are still uniquely geodesic then we obtain the following
characterization by assuming a stable version of either the symmetric
orthogonality property or the non-expansive projection property. 
\begin{thm}
[see Theorem \ref{thm:BusemannSO-CAT0}]\label{thm:Main2-BCplusSOequivCAT0}Assume
$(M,d)$ and all its tangent space $(T_{x}^{(o)}M,d_{x})$ are Busemann
convex geodesic spaces. If $M\times_{2}\mathbb{R}$ satisfies either
of the properties $(SO)$, $(A)$ or $(NE)$ then $(M,d)$ is a $CAT(0)$-space. 
\end{thm}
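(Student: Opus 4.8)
The plan is to make all three properties simultaneously available via Theorem~\ref{thm:Main1-SOeqNE}, to transfer them to the tangent cones, and then to reduce the $CAT(0)$ property of $M$ to the $CAT(0)$ property of each tangent cone, where the cone structure turns non-expansiveness of projections into a rigidity statement.

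First I would assume $(M,d)$ complete (otherwise pass to the completion, which is again Busemann convex and geodesic, has the same tangent cones, and still satisfies the hypothesis on $M\times_{2}\mathbb{R}$). Then $M\times_{2}\mathbb{R}$ is a complete Busemann convex geodesic space, so Theorem~\ref{thm:Main1-SOeqNE} applies to it and gives $(SO)$, $(A)$ and $(NE)$ at once; I shall use $(NE)$. Testing $(NE)$ on the slabs $C\times\mathbb{R}$ with $C\subseteq M$ closed convex — here $\pi_{C\times\mathbb{R}}(x,t)=(\pi_{C}(x),t)$, so the $\mathbb{R}$-coordinate cancels — also gives $(NE)$, hence $(SO)$ and $(A)$, for $M$ itself. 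Moreover all three properties are defined by non-strict inequalities between limits of geodesics (the perpendicularity relation $\gamma\bot_{p}\eta$ being a closed condition), so they are stable under the pointed Gromov--Hausdorff convergence of the rescaled spaces $(M,\lambda d,x)$ that defines the tangent cone; hence every $T^{(o)}_{x}M$ inherits them, and since the tangent cone commutes with the Euclidean product, so does $T^{(o)}_{(x,0)}(M\times_{2}\mathbb{R})=T^{(o)}_{x}M\times_{2}\mathbb{R}$.

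The heart of the matter is to show that each tangent cone $K:=T^{(o)}_{x}M$ is a $CAT(0)$-space. By hypothesis $K$ is a Busemann convex geodesic space, hence uniquely geodesic, and by construction it carries the structure of a metric cone $C(\Sigma)$ with apex $o$ over its space of directions $\Sigma$; then $K\times_{2}\mathbb{R}$ is the metric cone $C(\Sigma\ast\mathbb{S}^{0})$ over the spherical suspension of $\Sigma$. Since $C(\Sigma)$ is $CAT(0)$ exactly when $\Sigma$ is $CAT(1)$, which holds exactly when $\Sigma\ast\mathbb{S}^{0}$ is $CAT(1)$, i.e.\ exactly when $K\times_{2}\mathbb{R}$ is $CAT(0)$, it suffices to prove the following: \emph{a uniquely geodesic Busemann convex metric cone that satisfies $(NE)$ (equivalently $(A)$, equivalently $(SO)$) is a $CAT(0)$-space}. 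Here one uses that in a metric cone the law of cosines $d(r\xi,s\zeta)^{2}=r^{2}+s^{2}-2rs\cos\angle_{o}(\xi,\zeta)$ holds exactly; the idea is to feed into property $(A)$ the geodesics of the cone that avoid the apex together with the points whose projections onto them are being controlled, so that the inequalities $d(x_{C},\gamma_{t})\le d(x,\gamma_{t})$ become, after applying the law of cosines, the Alexandrov angle comparison — i.e.\ the $CAT(1)$ condition — for triples of directions of $\Sigma$. The role of first suspending with $\mathbb{S}^{0}$ is precisely to guarantee enough apex-avoiding geodesics, i.e.\ enough pairs of directions at mutual distance strictly less than $\pi$, to reach every configuration in $\Sigma$; this is exactly where the instability of $(SO)$ under products, and the Birkhoff-type rigidity behind it, comes in. I expect this step — recovering the full $CAT(1)$ comparison on $\Sigma$ from the non-expansiveness of projections onto apex-avoiding geodesics of $C(\Sigma\ast\mathbb{S}^{0})$ — to be the main technical obstacle, with the degenerate configurations (geodesics or projection points running close to the apex) demanding the most care.

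Finally, granting that every tangent cone of $M$ is $CAT(0)$, I would deduce that $M$ is $CAT(0)$. Fix a geodesic $\gamma\colon[0,1]\to M$ with $L=d(\gamma_{0},\gamma_{1})$ and a point $p$. Busemann convexity makes $t\mapsto d(p,\gamma_{t})$ convex, hence locally Lipschitz with one-sided derivatives everywhere; by the first variation formula — available here because the tangent cones are $CAT(0)$ — these equal $\mp\cos$ of the Alexandrov angles at $\gamma_{t}$ between $\gamma$ and the segment to $p$, and in a $CAT(0)$ cone such angles satisfy the law of cosines and the triangle inequality on the space of directions. Running the classical Alexandrov comparison — monotonicity of comparison angles along $\gamma$ — then upgrades the convexity of $t\mapsto d(p,\gamma_{t})$ to the quadratic (CN / Bruhat--Tits) inequality
\[
d(p,\gamma_{t})^{2}\le(1-t)\,d(p,\gamma_{0})^{2}+t\,d(p,\gamma_{1})^{2}-t(1-t)\,L^{2},
\]
which characterizes $CAT(0)$ among geodesic spaces; alternatively one first derives the local $CAT(0)$ inequality and globalizes via Cartan--Hadamard, the simple connectivity being automatic because Busemann convex geodesic spaces are contractible.
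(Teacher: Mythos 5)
Your overall architecture (pass to tangent cones, prove they are $CAT(0)$, transfer back to $M$) matches the paper's at a high level, and several of your reductions are sound: passing to the completion, extracting $(NE)$ for $M$ from the slabs $C\times\mathbb{R}$, and the identity $T^{(o)}_{(x,0)}(M\times_{2}\mathbb{R})=T^{(o)}_{x}M\times_{2}\mathbb{R}$. But the central step rests on a false premise. You assert that $K=T^{(o)}_{x}M$ carries the structure of a metric cone $C(\Sigma)$ over its space of directions in which the law of cosines $d(r\xi,s\zeta)^{2}=r^{2}+s^{2}-2rs\cos\angle_{o}(\xi,\zeta)$ holds exactly. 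For a Busemann convex space this is precisely what has to be proved, not what can be assumed: a strictly convex non-Euclidean normed plane is Busemann convex and is its own tangent cone, yet it satisfies no such law of cosines for any angular metric on its directions. Without the Euclidean cone structure, Berestovskii's correspondence ($C(\Sigma)$ is $CAT(0)$ iff $\Sigma$ is $CAT(1)$) and the suspension identity $K\times_{2}\mathbb{R}=C(\Sigma\ast\mathbb{S}^{0})$ are unavailable, and the plan of reading the $CAT(1)$ comparison for $\Sigma$ off property $(A)$ collapses into circularity. The paper avoids this by working only with two-dimensional pieces: Busemann's rigidity (Lemma \ref{lem:Busemann-convex-affine-rigidity} via Lemma \ref{lem:flat-sector}) shows that any two ultrarays span a flat sector of a \emph{normed} plane; only then does $(SO^{*})$ restricted to such a sector, combined with Birkhoff--James rigidity (Corollary \ref{cor:stableSO}), force that normed plane to be Euclidean; and Lemma \ref{lem:Euclidean-double-sector} glues the two Euclidean sectors on either side of a line to produce the CN inequality for the one configuration needed. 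You do invoke Birkhoff rigidity, but in the wrong place (to supply apex-avoiding geodesics in a suspension) rather than where it is needed (to upgrade flat normed sectors to Euclidean ones).

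Two smaller gaps. First, transferring $(SO)$ or $(NE)$ to the tangent cone is not a matter of ``non-strict inequalities being closed'': closedness shows that limits of orthogonal pairs are orthogonal, but to verify $(SO)$ in the limit space you must represent an arbitrary orthogonal pair there as a limit of pairs that are orthogonal in the approximating spaces, and the obvious approximants of a given pair need not be orthogonal. The paper's Lemma \ref{lem:ultralimit-so} repairs this by replacing $\gamma_{n}$ with the geodesic issuing from the foot point $q_{n}\in\pi_{\eta_{n}}(\gamma_{n}(t))$ and showing $q_{n}$ ultraconverges to the right point; this uses unique geodesics and unique projections in the limit, here supplied by the assumed Busemann convexity of the tangent cones. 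Second, your final globalization via first variation, angle monotonicity and Cartan--Hadamard presupposes a first variation formula in $M$ that is not established in this generality; the paper instead observes that Busemann convexity makes the CN defect $\tfrac{1}{2}d(x,z)^{2}+\tfrac{1}{2}d(y,z)^{2}-d(m,z)^{2}-\tfrac{1}{4}d(x,y)^{2}$ non-increasing under blow-up, so its non-negativity in the tangent cone passes directly to $M$ with no further analytic input.
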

Note that any $CAT(0)$-space $(M,d)$ satisfies the assumptions of
Theorem \ref{thm:Main2-BCplusSOequivCAT0} implying that the stable
version of the properties $(SO)$, $(A)$ and resp. $(NE)$ are Riemannian
properties. Indeed, even without Busemann convexity Proposition \ref{prop:two-sided-angles}
shows that a weak form of angles exists assuming the stable symmetric
orthogonality property and a weak form of convexity of the metric.

\subsection*{Preliminaries}

A metric space $(M,d)$ is said to be a \emph{geodesic space} if for
each $x,y\in M$ there is a $1$-Lipschitz map $\gamma:[0,1]\to M$
such that $\gamma_{0}=x$, $\gamma_{1}=y$ and 
\[
d(\gamma_{t},\gamma_{s})=|s-t|d(x,y).
\]
The map $\gamma$ is called a \emph{$[0,1]$-parametrized geodesic}.
$(M,d)$ is said to be \emph{uniquely geodesic} if for each $x,y\in M$
there is exactly one geodesic connecting those points. Similarly,
a $[a,b]$-parametrized geodesic $\gamma:[a,b]\to M$ if $\tilde{\gamma}_{t}=\gamma_{a+t(b-a)}$
is a $[0,1]$-parametrized geodesic. A unit speed geodesic between
$x$ and $y$ is a $[0,d(x,y)]$-parametrized geodesic between $x$
and $y$. Finally, we say $\eta:[0,\infty)\to M$ is a geodesic ray
and $\gamma:\mathbb{R}\to M$ is a geodesic line if for all $t,s\in[0,\infty)$
and $t',s'\in\mathbb{R}$ it holds $d(\eta_{t},\eta_{s})=|t-s|$ and
$d(\gamma_{t'},\gamma_{s'})=|t'-s'|$. Note that a geodesic line $\gamma$
induces two geodesic rays $\gamma^{\pm}$ defined by $\gamma_{t}^{\pm}=\gamma_{\pm t}$,
$t\in[0,\infty)$.

In the following, we always assume that metric spaces are complete
and geodesic and, if not mentioned otherwise, geodesics are assumed
to be $[0,1]$-parametrized.

A subset $C\subset M$ is \emph{weakly convex} if for all $x,y\in C$
there is a geodesic $\gamma$ connecting $x$ and $y$ such that $\gamma_{t}\in C$
for $t\in[0,1]$. If all geodesics connecting $x$ and $y$ lie entirely
in $C$ then $C$ is said to be \emph{convex}. Note that the image
of any geodesic is weakly convex. 

Given any subset $A\subset M$ we define the convex hull of $A$ as
follows: Let $G_{0}=A$ and for $n\ge1$ define 
\[
G_{n}=\bigcup_{x,y\in G_{n-1}}\{\gamma_{t}\,|\,\gamma\mbox{ is a geodesic connecting \ensuremath{x\:}and \ensuremath{y,\:}and \ensuremath{t\in[0,1]\}}}
\]
\[
\conv A=\bigcup_{n\in\mathbb{N}}G_{n}.
\]
 The closed convex hull of $A$ is now defined to be the closure of
$\conv A$.

Given a subset $C\subset M$ and $x\in M$, denote by $\pi_{C}(x)$
the set of nearest points of $x$ onto $C$, i.e. 
\[
\pi_{C}(x)=\{y\in C\,|\,d(x,y)=d(x,C)=\inf_{z\in C}d(x,z)\}.
\]
If $C$ is compact then $\pi_{C}(x)$ is always non-empty. If $\pi_{C}$
is single-valued then we regard $\pi_{C}$ as a (partially-defined)
map. In general $\pi_{C}(x)$ is neither non-empty nor single-valued. 

We say $(M,d)$ is \emph{uniformly $\infty$-convex} if there is a
function $\rho:(0,\infty)\to(0,\infty)$ such that for all $\epsilon>0$
and all points $x,y,z\in M$ with 
\[
d(y,z)>\epsilon\max\{d(x,y),d(x,z)\}
\]
it holds 
\[
d(x,m)\le(1-\rho(\epsilon))\max\{d(x,y),d(x,z)\}
\]
whenever $m$ is a midpoint of $y$ and $z$. Note that in a uniformly
$\infty$-convex space the projections $\pi_{C}$ onto any closed
convex set is single-valued. 

This definition of uniform convexity is equivalent to uniform convexity
in Banach spaces. Among several uniform convexity assumptions in the
metric setting this is one of the weakest (see e.g. \cite{Foertsch2004,Kell2014}).
This condition is needed to ensure that the tangent spaces in terms
of ultralimit blow-ups are uniquely geodesic (compare Lemma \ref{lem:ultralimit-so}
and \ref{lem:ultralimit-uni-conv} below). Every Riemannian and Finsler
manifold is locally uniformly $\infty$-convexity as this condition
is equivalent to strong convexity of balls in the smooth finite dimensional
setting (see \cite[Theorem 5.2]{Shen1997}). 

A popular and stronger condition is called \emph{Busemann convexity}
or \emph{non-positive curvature in the sense of Busemann} \cite[Section 36]{Busemann1955}.
For this one requires that for any geodesics $\gamma$ and $\eta$
the map 
\[
t\mapsto d(\gamma_{t},\eta_{t})
\]
is convex. As one may readily verify the condition is equivalent to
the following: for all geodesics $\gamma$ and $\eta$ with $\gamma_{0}=\eta_{0}$
it holds 
\[
d(\gamma_{\frac{1}{2}},\eta_{\frac{1}{2}})\le\frac{1}{2}d(\gamma_{1},\eta_{1}).
\]
Note that Busemann convex spaces are uniquely geodesic and the projection
onto compact convex sets, e.g. geodesics, is well-defined and single-valued.
Busemann's condition also implies the following rigidity theorem.
\begin{lem}
[{\cite[Theorem (36.9)]{Busemann1955}}]\label{lem:Busemann-convex-affine-rigidity}Let
$(M,d)$ be Busemann convex metric space. Assume there are two geodesics
$\gamma$ and $\eta$ such that $t\mapsto d(\gamma_{t},\eta_{t})$
is affine, i.e. 
\[
d(\gamma_{t},\eta_{t})=(1-t)d(\gamma_{0},\eta_{0})+td(\gamma_{1},\eta_{1}).
\]
Then the closed convex hull of $\gamma([0,1])\cup\eta([0,1])$ is
isometric to a convex subset of $\mathbb{R}^{2}$ equipped with a
strictly convex norm.
\end{lem}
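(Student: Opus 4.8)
The plan is to build an explicit parametrisation of the closed convex hull $C$ of $\gamma([0,1])\cup\eta([0,1])$ by the ruled surface joining $\gamma$ to $\eta$, and then to show that in suitable affine coordinates $d$ restricted to $C$ becomes the restriction of a norm. For $t\in[0,1]$ let $\zeta_t$ be the unique geodesic from $\gamma_t$ to $\eta_t$, of length $\ell(t)=d(\gamma_t,\eta_t)=(1-t)d(\gamma_0,\eta_0)+t\,d(\gamma_1,\eta_1)$ by hypothesis, and define $F\colon[0,1]^2\to M$ by letting $F(t,u)$ be the point of $\zeta_t$ at parameter $u$. Since each $\zeta_t$ joins two points of $\gamma([0,1])\cup\eta([0,1])$ and $F$ is continuous (continuity of geodesics in their endpoints following from Busemann convexity), $R:=F([0,1]^2)$ is a compact subset of $C$ containing $\gamma([0,1])\cup\eta([0,1])$. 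The whole task is to show that $R$ is convex (hence $R=C$) and that it carries an affine‑with‑a‑norm (``flat'') structure.

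I would first reduce to the case $\gamma_0=\eta_0$. Let $\sigma$ be the geodesic from $\gamma_0$ to $\eta_1$. The usual proof that $t\mapsto d(\gamma_t,\eta_t)$ is convex factors through $d(\gamma_t,\eta_t)\le d(\gamma_t,\sigma_t)+d(\sigma_t,\eta_t)\le t\,d(\gamma_1,\eta_1)+(1-t)d(\gamma_0,\eta_0)$, where the last step applies Busemann convexity to the pairs $(\gamma,\sigma)$ and $(\sigma,\eta)$, which have common endpoints $\gamma_0$ and $\eta_1$ respectively (for geodesics $\delta,\delta'$ with $\delta_0=\delta'_0$, convexity of $r\mapsto d(\delta_r,\delta'_r)$ gives $d(\delta_t,\delta'_t)\le t\,d(\delta_1,\delta'_1)$). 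Under the affinity hypothesis the two sides agree, so all inequalities are equalities; in particular $d(\gamma_t,\sigma_t)+d(\sigma_t,\eta_t)=d(\gamma_t,\eta_t)$ forces $\sigma_t\in\zeta_t$ by uniqueness of geodesics, with $d(\gamma_t,\sigma_t)=t\,d(\gamma_1,\eta_1)$ and $d(\sigma_t,\eta_t)=(1-t)d(\gamma_0,\eta_0)$. Thus $\sigma$ runs through every fibre, cutting $R$ into the two ruled surfaces of $(\gamma,\sigma)$ and $(\sigma,\eta)$, each again a pair of geodesics with a common endpoint and affine mutual distance; the two pieces share $\sigma$, and once each is flat the (compatible) models glue along $\sigma$ to a flat model of $R$, whose convexity is checked directly. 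So it suffices to treat two geodesics $\alpha,\beta$ with $\alpha_0=\beta_0=:p$ and $d(\alpha_t,\beta_t)=t\,c$, $c:=d(\alpha_1,\beta_1)$.

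In that situation the crucial point is that the radial curves of the ruling are geodesics. For $u\in[0,1]$ let $g_u$ be the geodesic from $p$ to the point $(\zeta_1)_u$ at parameter $u$ on $[\alpha_1,\beta_1]$. Busemann convexity at the common endpoint $p$, applied to $(\alpha,g_u)$ and to $(\beta,g_u)$, gives $d(\alpha_t,(g_u)_t)\le t\,d(\alpha_1,(\zeta_1)_u)=tuc$ and $d(\beta_t,(g_u)_t)\le t(1-u)c$; adding these, comparing with $d(\alpha_t,\beta_t)=tc$, and using the triangle inequality forces equality throughout, so $(g_u)_t\in\zeta_t$ at distance $tuc=u\cdot\operatorname{length}(\zeta_t)$ from $\alpha_t$, i.e.\ $(g_u)_t=F(t,u)$. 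Hence $R$ is foliated by the two transverse geodesic families $\{\zeta_t\}$ and $\{g_u\}$, meeting exactly at $F(t,u)$, and one reads off $d(F(t,u_1),F(t,u_2))=|u_1-u_2|\,tc$ along $\zeta_t$ and $d(F(t_1,u),F(t_2,u))=|t_1-t_2|\,d(p,(\zeta_1)_u)$ along $g_u$. Passing to the coordinates $\Phi\colon F(t,u)\mapsto(x,y)=(t(1-u),tu)$, under which $\alpha$, $\beta$, the fibres $\zeta_t=\{x+y=t\}$ and the radials $g_u$ (the segments through the origin) all become straight, these identities say that along the two foliations $d$ coincides with the positively $1$‑homogeneous function $N$ determined on the first quadrant by $N(1-u,u)=d(p,(\zeta_1)_u)$ (with $N(-1,1)=c$ forced by the fibres).

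The remaining, and principal, difficulty is to show that $d$ coincides with $N$ on all of $R$, not merely along the two foliations — equivalently, that $R$ is convex and that $\Phi^{-1}$ of every straight segment of the coordinate triangle is a geodesic. The obstruction is the ``corner‑to‑corner'' distances: $d(F(t_1,u_1),F(t_2,u_2))$ with the two points on different fibres and different radials is not determined by the triangle inequality together with the data above, and the vector $\Phi F(t_1,u_1)-\Phi F(t_2,u_2)$ need not even lie in the first quadrant. I would attack this by iterating the construction — for $u_1<u_2$ the pair $(g_{u_1},g_{u_2})$ again has common endpoint $p$ and affine mutual distance $d((g_{u_1})_t,(g_{u_2})_t)=|u_1-u_2|\,tc$ (both points lie on $\zeta_t$), so the radial‑geodesic statement propagates to all sub‑cones — and then upgrading to a genuine flat structure by a bisection argument, using the contraction estimate $d(\operatorname{mid}(x,y),\operatorname{mid}(x',y'))\le\tfrac12 d(x,x')+\tfrac12 d(y,y')$ valid in any Busemann convex space to identify midpoints of $F$‑points with $F$‑points of the averaged coordinates on a dyadically dense set, and invoking continuity. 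This would give $d(F(t_1,u_1),F(t_2,u_2))=N\bigl(t_1(1-u_1)-t_2(1-u_2),\,t_1u_1-t_2u_2\bigr)$ for a positively homogeneous $N$ on all of $\mathbb R^2$, which one checks to be a norm (subadditivity from the triangle inequality, symmetry and convexity from the metric structure). This flatness step is where the bulk of Busemann's original argument lies. Granting it, $\Phi$ is an isometry of $R$ onto a convex planar set; being a convex subset of the uniquely geodesic space $M$, $R$ is uniquely geodesic, hence so is its planar model, forcing $N$ strictly convex whenever $R$ is genuinely two‑dimensional; and when $R$ degenerates to a segment or a point the conclusion is immediate, since these embed isometrically and convexly into any strictly convex normed plane. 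Reassembling the two halves glued along $\sigma$ yields the statement for the original $\gamma,\eta$.
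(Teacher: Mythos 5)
The paper does not actually prove this lemma: it is imported verbatim from Busemann's book as Theorem (36.9), so there is no in-paper argument to measure you against. Judged on its own terms, your write-up gets the correct preliminary reductions: the diagonal geodesic $\sigma$ from $\gamma_0$ to $\eta_1$ is forced onto every fibre $\zeta_t$ by the equality case of the triangle inequality plus unique geodesics, and in the common-endpoint case the radial curves $g_u$ are genuinely geodesics with $(g_u)_t=F(t,u)$. These steps are sound and are indeed the natural opening moves.

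The problem is that the heart of the lemma is precisely the step you introduce with ``Granting it''. Your two proposed repairs do not close it. Iterating over sub-cones $(g_{u_1},g_{u_2})$ only reproduces distances along fibres and radials of the sub-cone, but those are sub-segments of the original fibres and radials, so no new corner-to-corner distance $d(F(t_1,u_1),F(t_2,u_2))$ is produced. The bisection argument fails as stated because the flat coordinates $(t(1-u),tu)$ are not affine in $(t,u)$: the midpoint of $F(t_1,u_1)$ and $F(t_2,u_2)$ in the intended planar model is not $F$ of any averaged parameters, and the Busemann midpoint inequality $d(\operatorname{mid}(x,y),\operatorname{mid}(x',y'))\le\tfrac12 d(x,x')+\tfrac12 d(y,y')$ only bounds distances from above, whereas flatness requires matching lower bounds; without those one cannot even conclude that $R$ is convex. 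Finally, the reassembly of the two halves along $\sigma$ is not automatic either: two flat triangles glued along a common edge need not embed in a single normed plane (compare the glued double sector in Lemma \ref{lem:Euclidean-double-sector}, which is merely $CAT(0)$, not flat), so you must separately verify that distances between points on opposite sides of $\sigma$ are additive through $\sigma$. As it stands the proposal is an accurate outline of where the difficulty lies, not a proof; the missing flatness argument is the content of Busemann's Theorem (36.9).
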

However, Busemann convexity is not stable under taking limit spaces
as can be observed by letting the $n$-dimensional $\ell^{p}$-spaces
converge to $(\mathbb{R}^{n},\|\cdot\|_{\infty})$. Furthermore, it
is not known whether tangent spaces of Busemann convex spaces are
themselves Busemann convex. 

A condition which is stable under limit operations is the $CAT(0)$-condition
which can be formulated via comparison triangles (see e.g. \cite[Chapter II.1]{Bridson1999}).
More precisely, $(M,d)$ is said to be a \emph{$CAT(0)$-space} if
for all $x,y,z\in M$ and $\tilde{x},\tilde{y},\tilde{z}\in\mathbb{R}^{2}$
satisfying 
\[
d(x,y)=\|\tilde{x}-\tilde{y}\|,d(x,z)=\|\tilde{x}-\tilde{z}\|,d(y,z)=\|\tilde{y}-\tilde{z}\|
\]
it holds
\[
d(x,m)\le\|\tilde{x}-\tilde{m}\|
\]
where $m$ and $\tilde{m}$ are the midpoints of $y$ and $z$, resp.
$\tilde{y}$ and $\tilde{z}$. 

In the following we define tangent spaces in terms of ultralimits
of a sequence of blow-ups at a fixed point. For this let $\omega$
be a \emph{non-principle ultrafilter} on $\mathbb{N}$. We will regard
$\omega$ as a finitely additive measure on $\mathbb{N}$ such that
for all $A\subset\mathbb{N}$ it holds $\omega(A)\in\{0,1\}$ and
$\omega(A)=0$ whenever $A$ is finite. Given a sequence $(a_{n})_{n\in\mathbb{N}}$
in $\mathbb{R}\cup\{\pm\infty\}$, the ultrafilter $\omega$ ``selects''
exactly one converging subsequence of $(a_{n})_{n\in\mathbb{N}}$.
We denote the unique limit by $\lim_{\omega}(a_{n})_{n\in\mathbb{N}}$
and called the \emph{ultralimit} of $(a_{n})_{n\in\mathbb{N}}$.

With of the ultrafilter $\omega$ we define \emph{blow-up tangent
spaces} as folows: Fix a sequence $(\lambda_{n})_{n\in\mathbb{N}}$
of positive numbers converging to zero. Then the tangent space $(T_{x}^{(o)}M,d_{x},x)$
at $x$ is the ultralimit of the sequence of pointed metric spaces
$(M,\lambda_{n}^{-1}d,x)_{n\in\mathbb{N}}$ where the points of $(T_{x}^{(o)}M,d_{x},x)$
are just the set of sequences $(x_{n})$ with $\lim_{\omega}\lambda_{n}^{-1}d(x_{n},x)<\infty$
and the metric is given by $d_{x}((x_{n}),(y_{n}))=\lim_{\omega}\lambda_{n}^{-1}d(x_{n},y_{n})$.
More precisely, the ultralimit is the set of equivalence classes since
we identify two sequences $(x_{n})$ and $(y_{n})$ whenever $\lim_{\omega}\lambda_{n}^{-1}d(x_{n},y_{n})=0$.
Note that $(T_{x}^{(o)}M,d_{x})$ depends on both the sequence $(\lambda_{n})_{n\in\mathbb{N}}$
and the ultrafilter $\omega$.

One can show that $(T_{x}^{(o)}M,d_{x})$ is complete and geodesic
whenever $(M,d)$ is geodesic. Indeed, if $(x_{n})$ and $(y_{n})$
are points in $T_{x}^{(o)}M$ and $(\gamma_{n})$ a sequence of geodesics
connecting $x_{n}$ and $y_{n}$ then $(\gamma_{n}(t))$ is in $T_{x}^{(o)}M$
and 
\[
\lim_{\omega}\frac{1}{t}d((x_{n}),(\gamma_{n}(t)))=\lim_{\omega}d((x_{n}),(y_{n}))=\frac{1}{1-t}\lim_{\omega}d((x_{n}),(\gamma_{n}(1-t)))
\]
so that $t\mapsto(\gamma_{n}(t))$ is a geodesic in $(T_{x}^{(o)}M,d_{x})$.
Thus if $(T_{x}^{(o)}M,d_{x})$ is uniquely geodesic then any geodesic
in $T_{x}^{(o)}M$ is given as an ultralimit of geodesics in $M$.

We denote the equivalence class obtained from the constant sequence
$x_{n}=x$ by $\mathbf{0}_{x}$. An \emph{ultraray} $\bar{\gamma}:[0,\infty)\to M$
is a geodesic ray obtained as follow: For a point $y\in M$ let $\gamma$
be a geodesic between $x$ and $y$. Then 
\[
\bar{\gamma}_{s}:=\lim_{\omega}\gamma_{\lambda_{n}s},s\ge0.
\]
Similarly, an \emph{ultraline} $\bar{\gamma}:\mathbb{R}\to M$ is
a geodesic line obtained from a unit geodesic $\gamma:[-a,a]\to M$
with $\gamma_{0}=x$ by setting 
\[
\bar{\gamma}_{s}:=\lim_{\omega}\gamma_{\lambda_{n}s},s\in\mathbb{R}.
\]

Note if $(M,d)$ is Busemann convex then for two ultrarays $\bar{\gamma}$
and $\bar{\eta}$ we have 
\[
d_{x}(\bar{\gamma}_{s},\bar{\eta}_{s'})=\lim_{\lambda\to0}\frac{d(\gamma_{\lambda s},\gamma_{\lambda s'})}{\lambda}.
\]
In particular, the metric is independent of the ultrafilter and the
scaling $(\lambda_{n})_{n\in\mathbb{N}}$. Using Lemma \ref{lem:Busemann-convex-affine-rigidity}
above the following holds.
\begin{lem}
\label{lem:flat-sector}Assume $(M,d)$ and $(T_{x}^{(o)}M,d_{x})$
are Busemann convex. Then the convex hull of any two ultrarays is
flat, i.e. it is isometric to a flat sector in the two-dimensional
plane $\mathbb{R}^{2}$ equipped with a strictly convex norm.
\end{lem}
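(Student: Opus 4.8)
The plan is to apply Lemma~\ref{lem:Busemann-convex-affine-rigidity} inside the Busemann convex space $(T_{x}^{(o)}M,d_{x})$ to truncations of the two ultrarays, and then to glue the resulting flat triangles into a single flat sector by a Mazur--Ulam argument. Fix the two ultrarays $\bar{\gamma},\bar{\eta}$ emanating from $\mathbf{0}_{x}$, arising from geodesics $\gamma,\eta$ in $M$. If $\bar{\gamma}_{1}=\bar{\eta}_{1}$ then, since $T_{x}^{(o)}M$ is uniquely geodesic, the two ultrarays share an initial segment and point in the same direction, so their convex hull is a single geodesic ray, a degenerate flat sector; hence we may assume $d_{x}(\bar{\gamma}_{1},\bar{\eta}_{1})>0$.

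The first step is to observe that the distance function along the two ultrarays is \emph{linear}: for all $a,b\ge 0$ the map $t\mapsto d_{x}(\bar{\gamma}_{at},\bar{\eta}_{bt})$ is linear on $[0,\infty)$. Indeed, from the identity $d_{x}(\bar{\gamma}_{s},\bar{\eta}_{s'})=\lim_{\lambda\to 0}\lambda^{-1}d(\gamma_{\lambda s},\eta_{\lambda s'})$, which holds because $M$ is Busemann convex, the substitution $\mu=\lambda t$ gives $d_{x}(\bar{\gamma}_{at},\bar{\eta}_{bt})=t\,d_{x}(\bar{\gamma}_{a},\bar{\eta}_{b})$ for $t>0$, the value being $0$ at $t=0$.

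Next, fixing $a,b>0$, I would view $\Gamma\colon t\mapsto\bar{\gamma}_{at}$ and $H\colon t\mapsto\bar{\eta}_{bt}$ as $[0,1]$-parametrised geodesics of the Busemann convex space $(T_{x}^{(o)}M,d_{x})$; they share the endpoint $\mathbf{0}_{x}$ and, by the previous step, $t\mapsto d_{x}(\Gamma_{t},H_{t})$ is affine. Lemma~\ref{lem:Busemann-convex-affine-rigidity} then yields an isometry of $S_{a,b}$, the closed convex hull of $\bar{\gamma}([0,a])\cup\bar{\eta}([0,b])$, onto a convex subset of $\mathbb{R}^{2}$ equipped with a strictly convex norm. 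Since in a strictly convex normed plane geodesics are affine segments, this convex subset is the convex hull of the two segments that are the images of $\bar{\gamma}([0,a])$ and $\bar{\eta}([0,b])$, i.e.\ a filled triangle with one vertex the image of $\mathbf{0}_{x}$, nondegenerate because $d_{x}(\bar{\gamma}_{1},\bar{\eta}_{1})>0$.

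It remains to glue these truncations into a single sector, and this is the main obstacle. Because $\conv(\bar{\gamma}([0,\infty))\cup\bar{\eta}([0,\infty)))=\bigcup_{n}\conv(\bar{\gamma}([0,n])\cup\bar{\eta}([0,n]))$ and the triangles $S_{n,n}$ are nested, I would pick isometries $\phi_{n}$ of $S_{n,n}$ onto filled triangles $T_{n}$ in strictly convex normed planes, and note that for $m\le n$ the composition $\phi_{n}\circ\phi_{m}^{-1}$ is an isometry between two convex bodies with nonempty interior in normed planes, hence by the Mazur--Ulam theorem the restriction of an affine isometry between those normed planes. Consequently all $T_{n}$ may be realised inside one fixed strictly convex normed plane $(\mathbb{R}^{2},\|\cdot\|)$ as nested filled triangles sharing the vertex $0$ (the common image of $\mathbf{0}_{x}$), with $\phi_{n}$ extending $\phi_{m}$ for $m\le n$; and since inside $T_{n}$ the image of $\bar{\gamma}([0,1])$ is the initial sub-segment of length-ratio $1/n$ of the image of $\bar{\gamma}([0,n])$, and similarly for $\bar{\eta}$, one gets $T_{n}=n\cdot\conv\{0,p,q\}$ where $p,q$ are the images of $\bar{\gamma}_{1},\bar{\eta}_{1}$. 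Hence $\bigcup_{n}T_{n}=\{sp+tq:s,t\ge 0\}$, a flat sector with apex the image of $\mathbf{0}_{x}$; as this closed set lies between $\conv(\bar{\gamma}\cup\bar{\eta})$ and its closure, and as every $s_{0}p+t_{0}q$ is the midpoint of $2s_{0}p$ and $2t_{0}q$, which lie on the two bounding rays and hence in $\conv(\bar{\gamma}\cup\bar{\eta})$, the convex hull itself is this flat sector. Beyond the two cited lemmas the essential ingredients are thus the rescaling computation of the first step and the Mazur--Ulam rigidity guaranteeing a uniform strictly convex normed plane, together with the linear scaling of the bounding segments that makes the nested triangles genuine dilates of one another.
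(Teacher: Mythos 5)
Your proof is correct and follows essentially the route the paper intends: the paper deduces the lemma directly from Lemma \ref{lem:Busemann-convex-affine-rigidity} together with the homogeneity identity $d_{x}(\bar{\gamma}_{s},\bar{\eta}_{s'})=\lim_{\lambda\to0}\lambda^{-1}d(\gamma_{\lambda s},\eta_{\lambda s'})$, which is exactly your linearity step, and your exhaustion-by-truncations argument just makes the implicit gluing explicit. The only quibble is one of attribution: extending an isometry between convex bodies with nonempty interior to an affine isometry of the ambient normed planes is Mankiewicz's theorem rather than Mazur--Ulam, which requires a surjective isometry of the whole spaces.
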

We also need the following technical lemma.
\begin{lem}
\label{lem:Euclidean-double-sector}Assume $(M,d)$ is Busemann convex,
$\gamma:\mathbb{R}\to M$ is a line and $\eta:[0,\infty)\to M$ a
ray such that $\gamma_{0}=\eta_{0}$. If the convex hulls of $\gamma([0,\infty))\cup\eta([0,\infty))$
and $\gamma((-\infty,0])\cup\eta([0,\infty))$ each span a flat Euclidean
sector then for all $s_{-}<0<s_{+}$ and $t\in[0,\infty)$ it holds
\[
d(\eta_{t},\gamma_{0})^{2}\le(1-\lambda)d(\eta_{t},\gamma_{s_{-}})^{2}+\lambda d(\eta_{t},\gamma_{s_{+}})^{2}-(1-\lambda)\lambda d(\gamma_{s_{-}},\gamma_{s_{+}})^{2}
\]
where $(1-\lambda)s_{-}+\lambda s_{+}=0$ and $\lambda\in(0,1)$. 
\end{lem}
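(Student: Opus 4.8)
\emph{Proof idea.} The plan is to develop the two flat Euclidean sectors into the plane $\mathbb{R}^{2}$, to read off the two opening angles at $\gamma_{0}$, to observe that $\gamma$ being a \emph{line} (and not merely a ray) forces those two angles to add up to at least $\pi$, and then to conclude by the Euclidean semi-parallelogram identity together with a short computation.

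First I would fix $s_{-}<0<s_{+}$ with $(1-\lambda)s_{-}+\lambda s_{+}=0$ and set $r_{-}=-s_{-}>0$, $r_{+}=s_{+}>0$, $L=r_{-}+r_{+}$, so that $d(\gamma_{s_{-}},\gamma_{s_{+}})=L$ (here $\gamma$ being a line is already used), $\lambda=r_{-}/L$, $1-\lambda=r_{+}/L$, and hence $(1-\lambda)\lambda L^{2}=r_{-}r_{+}$. By hypothesis the convex sets $S_{-}=\conv(\gamma((-\infty,0])\cup\eta([0,\infty)))$ and $S_{+}=\conv(\gamma([0,\infty))\cup\eta([0,\infty)))$, equipped with the induced metric, are isometric to flat Euclidean sectors with apex $\gamma_{0}=\eta_{0}$, one edge along $\eta$ and the other along $\gamma^{-}$ resp.\ $\gamma^{+}$. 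I would develop them isometrically into $\mathbb{R}^{2}$, placing the apex at the origin and the common $\eta$-edge along the positive $x$-axis in both pictures, with $\gamma^{-}$ into the upper and $\gamma^{+}$ into the lower half-plane. Writing $\alpha_{-},\alpha_{+}\in[0,\pi]$ for the opening angles, this sends $\eta_{t}\mapsto(t,0)$, $\gamma_{s_{-}}\mapsto r_{-}(\cos\alpha_{-},\sin\alpha_{-})$, $\gamma_{s_{+}}\mapsto r_{+}(\cos\alpha_{+},-\sin\alpha_{+})$ and $\gamma_{0}\mapsto 0$, and reading off distances inside $S_{-}$ and $S_{+}$ gives
\[
d(\eta_{t},\gamma_{0})=t,\qquad d(\eta_{t},\gamma_{s_{-}})^{2}=t^{2}+r_{-}^{2}-2tr_{-}\cos\alpha_{-},\qquad d(\eta_{t},\gamma_{s_{+}})^{2}=t^{2}+r_{+}^{2}-2tr_{+}\cos\alpha_{+}.
\]

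The key step, and the only place where the \emph{line} hypothesis is essential, is the claim $\cos\alpha_{-}+\cos\alpha_{+}\le 0$, equivalently $\alpha_{-}+\alpha_{+}\ge\pi$. To prove it I would use that $d(\gamma_{-\epsilon},\gamma_{\epsilon})=2\epsilon$ for every $\epsilon>0$, so by the triangle inequality through $\eta_{\delta}$ (any $\delta\ge 0$) and the law of cosines inside $S_{-}$ and $S_{+}$,
\[
2\epsilon\le d(\gamma_{-\epsilon},\eta_{\delta})+d(\eta_{\delta},\gamma_{\epsilon})=\sqrt{\epsilon^{2}+\delta^{2}-2\epsilon\delta\cos\alpha_{-}}+\sqrt{\epsilon^{2}+\delta^{2}-2\epsilon\delta\cos\alpha_{+}}.
\]
Fixing $\epsilon$ and expanding each square root to first order as $\delta\to 0^{+}$ yields $0\le-(\cos\alpha_{-}+\cos\alpha_{+})$, as claimed. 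The degenerate configurations $\eta=\gamma^{+}$ or $\eta=\gamma^{-}$ are covered by the same formulas with $\alpha=0$ or $\alpha=\pi$.

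Finally I would substitute the three distance formulas into the right-hand side of the asserted inequality. Using $\lambda=r_{-}/L$, $1-\lambda=r_{+}/L$ and $(1-\lambda)\lambda L^{2}=r_{-}r_{+}$, the $r_{\pm}^{2}$-terms cancel against $-(1-\lambda)\lambda\,d(\gamma_{s_{-}},\gamma_{s_{+}})^{2}$, and one is left with
\[
(1-\lambda)d(\eta_{t},\gamma_{s_{-}})^{2}+\lambda d(\eta_{t},\gamma_{s_{+}})^{2}-(1-\lambda)\lambda\,d(\gamma_{s_{-}},\gamma_{s_{+}})^{2}=t^{2}-\frac{2tr_{-}r_{+}}{L}\bigl(\cos\alpha_{-}+\cos\alpha_{+}\bigr),
\]
which is $\ge t^{2}=d(\eta_{t},\gamma_{0})^{2}$ since $t\ge 0$ and $\cos\alpha_{-}+\cos\alpha_{+}\le 0$; this is exactly the assertion. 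I expect the only genuinely non-routine point to be the angle estimate $\alpha_{-}+\alpha_{+}\ge\pi$; everything else is the Euclidean semi-parallelogram identity in the two developed sectors, together with the bookkeeping of the degenerate sectors and the trivial case $t=0$.
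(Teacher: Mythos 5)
Your proof is correct, and it reaches the conclusion by a more explicit route than the paper. The paper's argument is soft: it glues the two flat sectors $C_{\pm}$ along the common ray $\eta$ (Reshetnyak gluing of two $CAT(0)$-spaces along a convex subset), notes that the three relevant distances are unchanged because each pair of points lies in a single sector, observes that the line hypothesis forces $\gamma$ to remain a geodesic in the glued space, and then quotes the $CAT(0)$ semi-parallelogram inequality for the glued metric. You instead develop each sector separately into $\mathbb{R}^{2}$, and the single place where the line hypothesis enters -- in the paper, ``$\gamma$ is still a line in $\tilde{C}$'' -- appears in your version as the angle estimate $\cos\alpha_{-}+\cos\alpha_{+}\le0$, which you extract correctly from the triangle inequality $d(\gamma_{-\epsilon},\gamma_{\epsilon})\le d(\gamma_{-\epsilon},\eta_{\delta})+d(\eta_{\delta},\gamma_{\epsilon})$ by a first-order expansion in $\delta$; the rest is the law of cosines and bookkeeping, and your algebra (the cancellation of the $r_{\pm}^{2}$-terms against $(1-\lambda)\lambda L^{2}=r_{-}r_{+}$) checks out. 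What your approach buys is elementarity -- no appeal to the gluing theorem or to the $CAT(0)$ convexity inequality, everything is a plane computation; what the paper's approach buys is brevity and the fact that it yields the full $CAT(0)$ comparison for the glued space at once rather than just the one inequality. Both hinge on the same geometric fact, so the two proofs are morally equivalent, but yours is a legitimate independent verification.
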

\begin{proof}
Denote the two flat Euclidean sector by $C_{\pm}$. The metric spaces
$(C_{\pm},d\big|_{C_{\pm}\times C_{\pm}})$ are both $CAT(0)$-spaces.
If we glue them along the common ray $\eta$ then we obtain a new
$CAT(0)$-space $(\tilde{C},\tilde{d})$ satisfying the above inequality
for $\tilde{d}$. Observe now that for each $(x,y)\in\{(\eta_{t},\gamma_{0}),(\eta_{t},\gamma_{s_{-}}),(\eta_{t},\gamma_{s_{+}})\}$
it holds $\tilde{d}(x,y)=d(x,y)$ as $x$ and $y$ are both either
in $C_{+}$ or $C_{-}$. Finally, as $\gamma$ is a line in $M$,
it must be a line in $\tilde{C}$ as well, i.e. it holds 
\[
d(\gamma_{s_{-}},\gamma_{s_{+}})=|s_{+}-s_{-}|=\tilde{d}(\gamma_{s_{-}},\gamma_{s_{+}}).
\]
Hence the inequality holds for the metric $d$.
\end{proof}

\section*{Symmetric Orthogonality}

In this section we introduce the concept of symmetric orthogonality
and prove some corollaries which can be deduced directly deduced from
Birkhoff's main result on normed spaces (Lemma \ref{lem:Birkhoff-rigidity}).
\begin{defn}
[Birkhoff orthogonal] A geodesic $\gamma$ is said to be \emph{orthogonal}
to a geodesic $\eta$ if they intersect in a common point $p=\gamma_{0}=\eta_{0}$
and 
\[
\forall t,s\in[0,1]:d(p,\gamma_{t})\le d(\eta_{s},\gamma_{t}).
\]
In other words, for all $t\in[0,1]$ the point $p$ is a closest point
of $\gamma_{t}$ on $\eta$. In this case, we write $\gamma\perp_{p}\eta$. 
\end{defn}
For Riemannian manifolds it is known that if $\gamma$ and $\eta$
are short enough, i.e. $\gamma_{t}$ and $\eta_{t}$, $t\in[0,1]$,
stay sufficiently close to $\gamma_{0}$ then $\gamma\perp_{\gamma_{0}}\eta$
is equivalent to 
\[
g_{\gamma_{0}}(\dot{\gamma}_{0},\dot{\eta}_{0})\le0.
\]
Note that this angle characterization is symmetric. In particular,
for short geodesics $\gamma$ and $\eta$ in a Riemannian manifold
it holds $\eta\perp_{\gamma_{0}}\gamma$ if and only if $\eta\perp_{\gamma_{0}}\gamma$.
If the Riemannian manifold is a Hadamard spaces, i.e. it is simply
connected and has non-positive sectional curvature, then $\gamma\perp_{p_{0}}\eta$
is equivalent to $g_{\gamma_{0}}(\dot{\gamma}_{0},\dot{\eta}_{0})\le0$
for all geodesics $\gamma$, $\eta$. The symmetry ``$\gamma\perp_{\gamma_{0}}\eta$
if and only if $\eta\perp_{\gamma_{0}}\gamma$'' holds actually for
all simply connected Riemannian manifolds without focal points. We
leave the details to the interested reader.
\begin{defn}
[Symmetric Orthogonality] A geodesic space $(M,d)$ is said to satisfy
the \emph{symmetric orthogonality property} $(SO)$ if $\gamma\perp_{p}\eta$
implies $\eta\perp_{p}\gamma$. We say that $(SO)_{loc}$ holds if
each point admits a neighborhood $U$ such that the symmetry holds
for all geodesics lying in $U$.
\end{defn}
Any normed space satisfies $(SO)$ if it satisfies $(SO)_{loc}$.
Furthermore, for reflexive normed spaces whose dual space have strictly
convex norm, property \textbf{$(SO)$} is equivalent to ``$\ell_{v}(w)=0$
iff $\ell_{w}(v)=0$'' where $\ell_{v}$ and $\ell_{w}$ are the
duals of $v$ and resp. $w$ obtained by 
\begin{eqnarray*}
\ell_{v}(w') & = & \lim_{\epsilon\to0}\frac{\|v+\epsilon w'\|^{2}-\|v\|^{2}}{2\epsilon}\\
\ell_{w}(v') & = & \lim_{\epsilon\to0}\frac{\|w+\epsilon v'\|^{2}-\|w\|^{2}}{2\epsilon}.
\end{eqnarray*}
In Finsler geometry, $\ell_{v}$ is equal to $g_{v}(v,\cdot)$ where
$g_{v}$ is called the fundamental tensor at $v$. We refer to \cite{Shen1997,Ohta2008}
for all concepts of Finsler manifolds needed for the discussion below. 
\begin{lem}
[Linear Orthogonal Rigidity \cite{Birkhoff1935,James1947}]\label{lem:Birkhoff-rigidity}If
$(\mathbb{R}^{n},\|\cdot\|)$ is a normed vector space with $n>2$
then $(\mathbb{R}^{n},\|\cdot\|)$ satisfies $(SO)$ if and only if
$(\mathbb{R}^{n},\|\cdot\|)$ is Euclidean, i.e. the norm $\|\cdot\|$
is induced by an inner product. 
\end{lem}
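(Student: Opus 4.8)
This is the classical orthogonality‑rigidity theorem of Birkhoff and James, and I would prove the two implications separately, the forward one being a one‑line computation and the converse a reduction to Kakutani's theorem. \emph{If $\|\cdot\|$ is induced by an inner product, then $(SO)$ holds.} In a normed space geodesics through a point $p$ are affine segments, so for geodesics $\gamma,\eta$ with $\gamma_{0}=\eta_{0}=p$ and direction vectors $v=\gamma_{1}-p$, $w=\eta_{1}-p$ the relation $\gamma\perp_{p}\eta$ says $\|tv\|\le\|sw-tv\|$ for all $s,t\in[0,1]$, which by homogeneity is equivalent to $\|v\|\le\|v-rw\|$ for all $r\ge0$. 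When $\|\cdot\|^{2}=\langle\cdot,\cdot\rangle$ one has $\|v-rw\|^{2}-\|v\|^{2}=-2r\langle v,w\rangle+r^{2}\|w\|^{2}\ge0$ for every $r\ge0$ precisely when $\langle v,w\rangle\le0$, a condition symmetric in $v$ and $w$; hence $(SO)$, and a fortiori $(SO)_{loc}$, holds.

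For the converse, assume $(\mathbb{R}^{n},\|\cdot\|)$ satisfies $(SO)$ with $n>2$. Since $(SO)$ restricts verbatim to every linear subspace (its defining inequality involves only the norm, and every geodesic of a subspace is a geodesic of the whole space), it suffices to prove that every $3$‑dimensional normed space with $(SO)$ is Euclidean: for then every $3$‑dimensional subspace of $(\mathbb{R}^{n},\|\cdot\|)$, again satisfying $(SO)$, is Euclidean, so every pair of vectors obeys the parallelogram law and $(\mathbb{R}^{n},\|\cdot\|)$ is Euclidean by the Jordan--von Neumann theorem.

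So assume $n=3$, and fix a hyperplane $H=\ker g$, normalized so that the dual norm of $g$ equals $1$; using compactness of the unit sphere, pick a unit vector $v_{0}$ with $g(v_{0})=1$. For every $y\in H$ and every $t\in\mathbb{R}$ we then have $\|v_{0}+ty\|\ge g(v_{0}+ty)=g(v_{0})=1=\|v_{0}\|$, since $g(y)=0$ and $g$ has dual norm $1$; thus $v_{0}$ is Birkhoff orthogonal to every $y\in H$. Translating into the language of geodesics and applying $(SO)$ to the two geodesics through the origin with initial directions $v_{0}$ and $y$, and once more with directions $v_{0}$ and $-y$, yields $\|y\|\le\|y+tv_{0}\|$ for all $t\in\mathbb{R}$; that is, every $y\in H$ is in turn Birkhoff orthogonal to $v_{0}$. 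As $g(v_{0})\ne0$ we have $v_{0}\notin H$, so the linear projection $P$ of $\mathbb{R}^{3}$ onto $H$ along $\mathbb{R}v_{0}$ is well defined, and writing an arbitrary $x$ as $x=Px+t_{0}v_{0}$ with $Px\in H$ gives $\|Px\|\le\|Px+t_{0}v_{0}\|=\|x\|$, so $P$ is a norm‑one projection onto $H$. Hence every hyperplane of $\mathbb{R}^{3}$---and therefore, since projections onto lines always exist, every subspace---is the range of a norm‑one projection, so by Kakutani's characterization of Euclidean spaces \cite[Theorem 3]{Kakutani1939} the norm is induced by an inner product, and the reduction above settles all $n>2$.

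The only substantial ingredient is Kakutani's theorem, which I would quote rather than reprove; the passage from $(SO)$ to the existence of norm‑one projections is short once one notices that the kernel of a norming functional is automatically Birkhoff orthogonal from the point at which that functional attains its norm. The real content of the lemma therefore lies entirely outside this reduction: one may alternatively follow the more hands‑on arguments of Birkhoff \cite{Birkhoff1935} and James \cite{James1947}, which show directly that $(SO)$ together with $n>2$ forces the norm to be smooth and strictly convex and then build the inner product. Either way the third dimension must be used in an essential way, since in dimension $2$ there are non‑Euclidean norms satisfying $(SO)$---the Radon norms, among them non‑smooth ones such as a regular‑hexagon norm---and that is where the work goes.
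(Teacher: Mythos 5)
The paper does not prove this lemma at all --- it is quoted as a known result of Birkhoff and James, with the citation doing all the work --- so there is no ``paper proof'' to match; what you have done is supply an actual argument, and it is correct. Your route is the standard one from James's 1947 paper and from Amir's book on characterizations of inner product spaces: the forward direction is the computation $\|v-rw\|^{2}-\|v\|^{2}=-2r\langle v,w\rangle+r^{2}\|w\|^{2}$, and the converse shows that if $g$ is a norming functional attained at $v_{0}$ then $v_{0}$ is Birkhoff orthogonal to $\ker g$, symmetry turns this around to make the linear projection onto $\ker g$ along $v_{0}$ have norm one, and Kakutani's theorem \cite[Theorem 3]{Kakutani1939} (which the paper also cites, in the context of property $(NE)$) finishes. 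Two points are handled well and deserve to be: you correctly translate the paper's one-sided, $[0,1]$-parametrized orthogonality into the condition $\|v\|\le\|v-rw\|$ for $r\ge0$ only, and you compensate by applying $(SO)$ to both $y$ and $-y$ to recover the two-sided inequality needed for the norm-one projection; and the reduction from $n>2$ to $n=3$ via restriction of $(SO)$ to subspaces plus Jordan--von Neumann is clean. The only caveat is your closing aside: the regular-hexagon norm is a Radon plane for the \emph{linear} notion of Birkhoff orthogonality, but in the paper's metric formulation a non-strictly-convex norm has non-affine geodesics, so whether such a plane satisfies the metric property $(SO)$ as defined here is a separate question; the paper itself only claims an abundance of \emph{strictly convex} two-dimensional examples. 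This does not affect the proof of the lemma.
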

Actually this result holds more general for any Finsler manifolds. 
\begin{prop}
\label{prop:Finsler-SO}Assume $(M,F)$ is a smooth Finsler manifold.
Then $(M,F)$ satisfies $(SO)_{loc}$ if and only if each tangent
space $(T_{x}M,F_{x})$ satisfies $(SO)$. In particular, $(M,F)$
is either $2$-dimensional or a Riemannian manifold. 

In case $(M,F)$ is has non-positive flag curvature then the local
condition $(SO)_{loc}$ implies the global condition $(SO)$.
\end{prop}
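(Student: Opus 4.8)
The plan is to reduce the statement to the linear rigidity result (Lemma \ref{lem:Birkhoff-rigidity}) via a blow-up/tangent-cone argument, exploiting that on a smooth Finsler manifold the exponential map is a local diffeomorphism whose differential at the origin is the identity, so that the infinitesimal geometry at $x$ is exactly that of the normed space $(T_xM, F_x)$. First I would establish the easy direction: if each tangent space $(T_xM,F_x)$ satisfies $(SO)$, then short geodesics through a point $x$ are, up to first order, geodesics (line segments) in $(T_xM,F_x)$, and the orthogonality relation $\gamma\perp_{\gamma_0}\eta$ for short geodesics is detected by the pairing $g_{\dot\gamma_0}(\dot\gamma_0,\dot\eta_0)\le 0$ --- this is the Finsler analogue of the Riemannian fact quoted just before the statement, and I would either cite it from \cite{Shen1997,Ohta2008} or sketch it using the first variation of arc length. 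Given that characterization and the fact (from the discussion preceding the lemma) that on the normed space the relation ``$\ell_v(w)=0$ iff $\ell_w(v)=0$'' is exactly $(SO)$, symmetry of orthogonality downstairs follows from symmetry upstairs, uniformly on a neighborhood, giving $(SO)_{loc}$.

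For the converse, I would argue contrapositively: if some tangent space $(T_{x_0}M,F_{x_0})$ fails $(SO)$, there are vectors $v,w$ with $\ell_v(w)\le 0$ but $\ell_w(v)>0$ (strictly, after a small perturbation using smoothness of $g$). Transport these to honest geodesics $\gamma,\eta$ emanating from $x_0$ by the exponential map, rescaled to be very short. Because $\exp_{x_0}$ is a diffeomorphism with $d(\exp_{x_0})_0=\mathrm{id}$, the metric quantities $d(\gamma_t,\eta_s)$ differ from the corresponding normed-space quantities $F_{x_0}(t v - s w)$-type expressions by an error that is higher order in the (small) length scale. Hence for short enough geodesics the sign of the relevant ``projection excess'' $d(\gamma_0,\gamma_t)^2 - d(\eta_s,\gamma_t)^2$ is governed by the corresponding normed-space sign, so $\gamma\perp_{\gamma_0}\eta$ holds while $\eta\perp_{\gamma_0}\gamma$ fails in every neighborhood of $x_0$, contradicting $(SO)_{loc}$. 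The ``in particular'' clause is then immediate from Lemma \ref{lem:Birkhoff-rigidity}: each $(T_xM,F_x)$ is a normed space satisfying $(SO)$, so if $\dim M>2$ every tangent norm is Euclidean, and a Finsler manifold all of whose tangent norms are inner-product norms is Riemannian.

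For the last paragraph (non-positive flag curvature $\Rightarrow$ $(SO)_{loc}$ implies global $(SO)$), the idea is that on a Finsler manifold of non-positive flag curvature the exponential map at each point is a diffeomorphism onto the whole manifold (Cartan--Hadamard for Finsler, see \cite{Shen1997}), geodesics are globally minimizing, and the function $t\mapsto d(\gamma_t,\eta_t)$ is convex, so there are no focal points. Under these hypotheses the infinitesimal orthogonality criterion $g_{\dot\gamma_0}(\dot\gamma_0,\dot\eta_0)\le 0$ is equivalent to the global projection condition $\gamma\perp_{\gamma_0}\eta$ --- one direction is the first variation formula, the other uses convexity of $t\mapsto d(\eta_s,\gamma_t)$ to propagate the infinitesimal inequality to all of $[0,1]$. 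Consequently the local symmetry, which by the first part is equivalent to $(SO)$ holding on every tangent space, upgrades to the global statement. The main obstacle I anticipate is making the blow-up error estimates in the converse direction fully rigorous --- i.e.\ controlling $d(\gamma_t,\eta_s)$ by $F_{x_0}$ with an error $o(\text{scale})$ uniformly, and ensuring the strict inequality $\ell_w(v)>0$ survives this perturbation --- together with locating a clean reference for the Finsler first-variation/projection equivalence so that this step need not be reproved from scratch.
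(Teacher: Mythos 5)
Your plan follows essentially the same route as the paper: reduce everything to the tangent norms via the first variation formula, invoke Birkhoff's rigidity (Lemma \ref{lem:Birkhoff-rigidity}) for the ``in particular'' clause, and use global convexity of the squared distance under non-positive flag curvature for the last assertion. The one place where your version, as written, would break is the converse direction. You propose to transfer the failure of $(SO)$ from $(T_{x_0}M,F_{x_0})$ to $M$ by an error estimate of the form $d(\gamma_t,\eta_s)=F_{x_0}(tv-sw)+o(\text{scale})$ and to read off the sign of the ``projection excess'' from the normed-space sign. That works for the half of the argument where orthogonality must \emph{fail} (there $g_w(w,v)\ne 0$ gives a first-order, hence robust, decrease of $s\mapsto d^2(\gamma_v(\epsilon),\gamma_w(s))$ away from $s=0$), but not for the half where orthogonality must \emph{hold}: when $g_v(v,w)=0$ the normed-space excess near $s=0$ is only second order in $s$, so an $o(\text{scale})$ error can flip its sign and you cannot conclude that $x_0$ is a closest point. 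You correctly flag this as the obstacle but do not close it. The paper avoids the blow-up estimates altogether: the first variation formula gives the \emph{exact} derivative $\frac{d}{d\epsilon}d^{2}(\gamma_{v}(-t),\gamma_{w}(\epsilon))\big|_{\epsilon=0}=g_{v}(v,w)=0$, and then \cite[Theorem 5.2]{Shen1997} (local strong convexity of the squared distance from a fixed point) promotes this critical point to a global minimum in a neighborhood, yielding $\gamma_{v}\perp_{x_{0}}\gamma_{w}$ honestly. You already cite \cite{Shen1997,Ohta2008} for the easy direction; deploying the same strong convexity at this step is exactly the missing ingredient, and it is also what replaces your Cartan--Hadamard discussion in the flag-curvature case, where the paper instead cites \cite{Egloff1997} for global strict convexity of $d^{2}$ so that $g_{v}(v,w)=0$ becomes equivalent to $\gamma_{v}\perp_{x_{0}}\gamma_{w}$ globally.
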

\begin{rem*}
The proposition also holds for $L^{2}$-products of Finsler manifolds
which, in general, do not have $C^{2}$-Finsler structures. However,
the first variation and the local strong convexity of the square of
the distance still holds for the factors and hence their product.
\end{rem*}
\begin{proof}
For simplicity assume $F$ is symmetric. The proof can be easily adapted
to the asymmetric case as it only relies on the first variation formula
and local strong convexity of the square of the distance.

Assume for some $x_{0}\in M$ the tangent space $(T_{x_{0}}M,F_{x_{0}})$
does not satisfy property $(SO)$. Denote by $g_{v}$ the fundamental
tensor at $v\in TM$. Since the symmetric orthogonality property $(SO)$
does not hold for $(T_{x_{0}}M,F_{x_{0}})$, there are $v,w\in T_{x_{0}}M$
such that 
\[
g_{v}(v,w)=0\ne g_{w}(w,v).
\]
 Let $\gamma_{v},\gamma_{w}:(-\epsilon,\epsilon)$ be geodesics with
$\dot{\gamma}_{v}(0)=v$ and $\dot{\gamma}_{w}(0)=w$. The first variation
formula yields 
\begin{eqnarray*}
\frac{d}{d\epsilon}d^{2}(\gamma_{v}(-t),\gamma_{w}(\epsilon)) & = & g_{v}(v,w)=0\\
\frac{d}{d\epsilon}d^{2}(\gamma_{v}(\epsilon),\gamma_{w}(-s)) & = & g_{w}(w,v)\ne0.
\end{eqnarray*}
By \cite[Theorem 5.2]{Shen1997} (see also \cite[Corollary 5.2]{Ohta2008}),
in a neighborhood $U$ of $x$ the square of the distance from a fixed
point $x'\in U$ is strongly convex (in $U$) hence any critical point
along a geodesic in $U$ is automatically a global minimum (in $U$).
Thus we see that $\gamma_{v}\bot_{x_{0}}\gamma_{w}$ but $\gamma_{w}\not\bot\gamma_{v}$.
In particular, $(M,F)$ cannot satisfy the symmetric orthogonality
property $(SO)$.

In case $(M,F)$ has non-positive flag curvature, the square of the
distance from fixed points is strictly convex \cite{Egloff1997} (see
also remark after \cite[Corollary 5.2]{Ohta2008}). Therefore, $g_{v}(v,w)=0$
iff $\gamma_{v}\bot_{x_{0}}\gamma_{w}$. 
\end{proof}
A well-known class of $n$-dimensional simply connected Finsler manifolds
with non-positive flag curvature are Hilbert geometries (see \cite{Papadopoulos2014}
for an introduction to Hilbert geometries). One may verify that Hilbert
geometries whose tangent norms satisfy everywhere $(SO)$ must already
be Riemannian manifolds and therefore isometric to the hyperbolic
space. Using a very elegant and short argument this fact was obtained
by Kelly and Paige in \cite{Kelly1952}.
\begin{prop}
[Hyperbolic Orthogonal Rigidity in Hilbert Geometry \cite{Kelly1952}]Any
$n$-dimensional Hilbert geometry satisfying $(SO)$ is isometry the
$n$-dimensional hyperbolic space.
\end{prop}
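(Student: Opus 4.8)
The plan is to reduce property $(SO)$ for a Hilbert geometry to a statement about its tangent norms, apply the linear rigidity of Lemma \ref{lem:Birkhoff-rigidity} when $n\ge 3$, and treat the planar case by hand using the projective structure.

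Since a Hilbert geometry on a bounded convex domain $\Omega\subset\mathbb{R}^n$ is an $n$-dimensional Finsler manifold of non-positive flag curvature, Proposition \ref{prop:Finsler-SO} applies: as $(M,d)$ satisfies $(SO)$ it satisfies $(SO)_{loc}$, hence every tangent space $(T_xM,F_x)$ satisfies $(SO)$. The relevant direction here is the elementary one, which follows from the first variation formula together with the fact that geodesics of a Hilbert geometry are straight chords, so that $\gamma\perp_x\eta$ is equivalent to the infinitesimal condition $g_{\dot\gamma_x}(\dot\gamma_x,\dot\eta_x)=0$ (using reversibility of the Hilbert metric to handle both branches of $\gamma$ at once, and non-positive flag curvature to upgrade ``critical'' to ``global minimum'' along the line $\eta$).

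For $n\ge 3$ the argument now closes quickly. Each tangent space is an $n$-dimensional normed space with $n\ge 3$ satisfying $(SO)$, so by Lemma \ref{lem:Birkhoff-rigidity} its norm is Euclidean; thus $(M,F)$ is a Riemannian manifold whose geodesics are straight lines. By Beltrami's theorem such a projectively flat Riemannian metric has constant sectional curvature, and since no Hilbert geometry is isometric to Euclidean space this constant must be negative; normalizing, $(M,d)$ is the Beltrami--Klein model of $\mathbb{H}^n$ and $\partial\Omega$ is an ellipsoid. Equivalently, one invokes the classical fact that a Riemannian Hilbert geometry is isometric to hyperbolic space.

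The remaining and genuinely delicate case is $n=2$, where Lemma \ref{lem:Birkhoff-rigidity} gives nothing, since there are many strictly convex planar norms with $(SO)$. Here I would follow Kelly and Paige and use the global structure directly: fix a chord $\eta$ with endpoints $A,B\in\partial\Omega$ and a point $p\in\eta$, express the Hilbert distance from a variable point to $\eta$ via the cross-ratio, and use the first variation identity above to write the perpendicular chord $\iota_p(\eta)$ at $p$ explicitly in terms of $A$, $B$, $p$ and $\partial\Omega$. Property $(SO)$ asserts that $\iota_p$ is an involution on the pencil of chords through each point $p$; imposing this at every point and propagating the constraint as $p$ and the chord vary, using the affine structure of $\mathbb{R}^2$, forces a pole--polar relation along $\partial\Omega$ and hence a second order differential equation whose only closed strictly convex solutions are conics. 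Therefore $\Omega$ is an ellipse and $(M,d)$ is isometric to $\mathbb{H}^2$. I expect this last step --- turning ``$\iota_p$ is an involution at every $p$'' into a rigid constraint on the boundary curve --- to be the main obstacle; everything else is formal given Proposition \ref{prop:Finsler-SO}, Lemma \ref{lem:Birkhoff-rigidity}, and the classical description of Riemannian Hilbert geometries.
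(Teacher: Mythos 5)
The paper itself offers no proof of this proposition: it is stated purely as a citation of Kelly and Paige, preceded only by the remark that ``one may verify that Hilbert geometries whose tangent norms satisfy everywhere $(SO)$ must already be Riemannian manifolds and therefore isometric to the hyperbolic space.'' Your argument for $n\ge3$ is exactly that remark made precise: global $(SO)$ trivially gives $(SO)_{loc}$, Proposition \ref{prop:Finsler-SO} transfers this to the tangent norms (non-positive flag curvature upgrading critical points of $d^{2}$ to global minima), Lemma \ref{lem:Birkhoff-rigidity} forces each tangent norm to be Euclidean, and Beltrami's theorem plus the classical classification of Riemannian Hilbert geometries identifies the space with $\mathbb{H}^{n}$. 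That part is sound, with the mild caveat that the Finsler route tacitly assumes $\partial\Omega$ is smooth and strongly convex so that the Hilbert metric is an honest $C^{2}$ Finsler structure with non-positive flag curvature; Kelly and Paige's projective argument in \cite{Kelly1952} needs no such regularity.

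The genuine gap is the case $n=2$, which you correctly isolate but do not close. Lemma \ref{lem:Birkhoff-rigidity} is vacuous there --- Birkhoff exhibits an abundance of non-Euclidean planar norms with symmetric orthogonality --- and the step you defer, namely that requiring the perpendicularity map $\iota_{p}$ to be an involution on the pencil of chords through every point forces a polarity on $\partial\Omega$ and hence forces $\partial\Omega$ to be a conic, is not a technical afterthought: it is the entire content of Kelly and Paige's theorem, and it is precisely where the cross-ratio description of Hilbert perpendiculars must actually be exploited. Moreover, the higher-dimensional statement reduces to this planar case anyway, since $2$-plane sections of $\Omega$ are totally geodesic and inherit $(SO)$, and a convex body all of whose planar sections are ellipses is an ellipsoid; so the planar case is the heart of the matter rather than a residual nuisance. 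To complete the proof you must either carry out the involution-to-polarity computation or cite it; the paper does the latter, and as written your proposal establishes the proposition only for $n\ge3$.
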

The general condition $(SO)$ does not exclude all non-Riemannian
geometry. Nevertheless, it indicates that it is not stable, i.e. in
general the $L^{2}$-product of two spaces satisfying $(SO)$ does
not satisfy $(SO)$, not even locally. For that reason we define a
stronger condition. Indeed, if $(\mathbb{R}^{2}\times\mathbb{R},(\|\cdot\|^{2}+|\cdot|)^{\frac{1}{2}})$
satisfies the symmetric orthogonality property $(SO)$ then its norm
must be Euclidean. In particular, its subspace $(\mathbb{R}^{2},\|\cdot\|)$
must be Euclidean as well. Thus we are led to define the following.
\begin{defn}
[Stable Symmetric Orthogonality] A geodesic space $(M,d)$ is said
to satisfy the \emph{stable symmetric orthogonality} property $(SO^{*})$
if the metric space $M\times_{2}\mathbb{R}=(M\times\mathbb{R},\tilde{d})$
satisfies symmetric orthogonality property $(SO)$ where $\tilde{d}((x,t),(y,s))^{2}=d(x,y)^{2}+|t-s|^{2}$.
The local version will be denoted by $(SO^{*})_{loc}$.
\end{defn}
\begin{cor}
\label{cor:stableSO}Every normed vector space and every Finsler manifold
satisfying $(SO^{*})_{loc}$ is a Riemannian manifold.
\end{cor}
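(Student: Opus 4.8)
The plan is to bootstrap from Birkhoff's linear rigidity (Lemma~\ref{lem:Birkhoff-rigidity}), using the auxiliary $\mathbb{R}$-factor built into the definition of $(SO^{*})$, together with the Finsler characterisation of Proposition~\ref{prop:Finsler-SO}. The one elementary point I would record first is that property $(SO)$ passes to totally geodesic submanifolds, in particular to linear subspaces of a normed space: if $N\subseteq X$ is totally geodesic then $d\big|_{N}$ is the restriction of $d$, every geodesic of $N$ is a geodesic of $X$, and the inequality $d(p,\gamma_{t})\le d(\eta_{s},\gamma_{t})$ defining $\gamma\perp_{p}\eta$ is unchanged on passing from $X$ to $N$; hence $(SO)$ on $X$ forces $(SO)$ on $N$. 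Note that $M\times\{0\}\subseteq M\times_{2}\mathbb{R}$ is totally geodesic, since a geodesic of $M\times_{2}\mathbb{R}$ splits as a constant-speed geodesic of $M$ together with an affine $\mathbb{R}$-component.

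For a normed space $(V,\|\cdot\|)$ satisfying $(SO^{*})_{loc}$, the space $V\times_{2}\mathbb{R}$ satisfies $(SO)_{loc}$, hence $(SO)$, since a normed space with $(SO)_{loc}$ has $(SO)$. We may assume $\dim V\ge 2$, the case $\dim V\le 1$ being trivial. For every $2$-dimensional subspace $W\subseteq V$ the $3$-dimensional subspace $W\times_{2}\mathbb{R}$ inherits $(SO)$ by the observation above, so Lemma~\ref{lem:Birkhoff-rigidity} makes $W\times_{2}\mathbb{R}$ Euclidean, and therefore $W\cong W\times\{0\}$, carrying the induced norm, is Euclidean. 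As every $2$-dimensional subspace of $V$ is thus an inner product space, the parallelogram identity holds on all of $V$ (Jordan--von Neumann), so $(V,\|\cdot\|)$ is an inner product space, i.e.\ a Euclidean (or Hilbert) space.

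For a Finsler manifold $(M,F)$ satisfying $(SO^{*})_{loc}$, the space $M\times_{2}\mathbb{R}$ is an $L^{2}$-product of Finsler manifolds and satisfies $(SO)_{loc}$. Proposition~\ref{prop:Finsler-SO}, together with the remark extending it to such products, then gives that each tangent space $T_{(x,t)}(M\times_{2}\mathbb{R})\cong T_{x}M\times_{2}\mathbb{R}$ satisfies $(SO)$; by the normed-space step this space is Euclidean, hence so is the induced norm $F_{x}$ on $T_{x}M\times\{0\}$. Since this holds at every $x$, the structure $F$ is induced by a Riemannian metric. (Equivalently, invoke the last sentence of Proposition~\ref{prop:Finsler-SO}: if $\dim M\ge 2$ then $M\times_{2}\mathbb{R}$ has dimension $\ge 3$ and is hence Riemannian, and $M\times\{0\}$ is a totally geodesic submanifold whose induced structure is $F$; the case $\dim M\le 1$ is trivial.) I do not expect a genuine obstacle here: the argument is bookkeeping around two facts --- heredity of $(SO)$ under totally geodesic submanifolds, and the applicability of Proposition~\ref{prop:Finsler-SO} to the non-$C^{2}$ product $M\times_{2}\mathbb{R}$ --- both of which are settled above or by the cited remark.
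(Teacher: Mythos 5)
Your proposal is correct and follows essentially the same route as the paper's (only implicitly given) argument: restrict $(SO)$ to the three\-/dimensional subspaces $W\times_{2}\mathbb{R}$, invoke Lemma~\ref{lem:Birkhoff-rigidity} there, and reduce the Finsler case to tangent norms via Proposition~\ref{prop:Finsler-SO} and the remark on $L^{2}$-products. The only additions beyond the paper's sketch are routine: the heredity of $(SO)$ under isometrically embedded geodesic subspaces and the Jordan--von Neumann step to pass from two-dimensional subspaces to all of $V$.
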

More generally, we can show that the stable symmetric orthogonality
$(SO^{*})$ implies the existence of symmetric angles if one-sided
angles are well-defined. This is the case if the metric is s\emph{trictly
$p$-convex}, $p>1$, i.e. if for any $x\in M$ the map $y\mapsto d^{p}(x,y)$
is strictly convex. 
\begin{prop}
\label{prop:two-sided-angles}Assume $(M,d)$ is strictly $p$-convex
for some $p>1$. Then $(M,d)$ satisfies the stable symmetric orthogonality
property $(SO^{*})$ if and only if for all unit speed geodesics $\gamma,\eta:[0,1]\to M$
starting at $\gamma_{0}=\eta_{0}$ it holds 
\[
\lim_{t\to0^{+}}\frac{d^{2}(\eta_{t},\gamma_{1})-d^{2}(\eta_{0},\gamma_{1})}{t}=\lim_{s\to0^{+}}\frac{d^{2}(\eta_{1},\gamma_{s})-d^{2}(\eta_{1},\gamma_{0})}{s}.
\]
\end{prop}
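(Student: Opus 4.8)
The plan is to relate symmetric orthogonality in the product space $M \times_2 \mathbb{R}$ to the two one-sided derivatives appearing in the statement, using the fact that in the $\mathbb{R}$-factor everything is Euclidean so the one-sided angle there is exact.

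First I would fix unit speed geodesics $\gamma, \eta : [0,1] \to M$ with $\gamma_0 = \eta_0 = p$, and introduce the auxiliary quantities
\[
D_\eta^+(v) := \lim_{t\to 0^+} \frac{d^2(\eta_t, v) - d^2(p, v)}{t}, \qquad v \in M,
\]
and likewise $D_\gamma^+$. Strict $p$-convexity guarantees these one-sided limits exist (the relevant function is convex in $t$ after reparametrization, since $t \mapsto d^p(\eta_t, v)$ is strictly convex and $d^2 = (d^p)^{2/p}$ with $2/p$ possibly less than one is handled by composing with the convex increasing map on the convex function — one must check monotonicity of the difference quotient, which is the standard one-sided differentiability of convex functions). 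So the identity to be proven reads $D_\eta^+(\gamma_1) = D_\gamma^+(\eta_1)$ for all such $\gamma, \eta$. The key computation is: in $M \times_2 \mathbb{R}$, consider for a parameter $\theta$ the geodesic $\tilde\gamma^\theta$ through $(p,0)$ obtained by combining the geodesic $\gamma$ in $M$ with a linear motion in $\mathbb{R}$ at appropriate speed, and similarly $\tilde\eta^\theta$. Because the metric splits as a sum of squares, one has
\[
\tilde d^2(\tilde\eta^\theta_t, \tilde\gamma^\phi_1) = (\text{cross terms in } d^2) + (\text{linear terms from } \mathbb{R}),
\]
so the one-sided derivative along $\tilde\eta^\theta$ at $(p,0)$ of $\tilde d^2(\cdot, \tilde\gamma^\phi_1)$ decomposes as $\cos$-weighted combinations of $D_\eta^+(\gamma_1)$, the Euclidean inner product of the $\mathbb{R}$-velocities, and the mixed terms coming from moving the endpoint in the $\mathbb{R}$ direction. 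The orthogonality relation $\tilde\gamma^\theta \perp_{(p,0)} \tilde\eta^\phi$ is, by the projection/first-variation characterization, exactly the statement that the relevant one-sided derivative is $\le 0$; with $(SO^*)$ this inequality is symmetric in the two geodesics.

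The heart of the argument is then a two-sided sandwich: assuming $(SO^*)$, I would choose the $\mathbb{R}$-speeds so that $\tilde\gamma^\theta \perp \tilde\eta^\phi$ holds, read off from symmetry that $\tilde\eta^\phi \perp \tilde\gamma^\theta$, and let the $\mathbb{R}$-speeds tend to zero; the cross terms from $\mathbb{R}$ vanish in the limit and one is left with $D_\eta^+(\gamma_1) \le D_\gamma^+(\eta_1)$. Running the same argument with the roles of $\gamma$ and $\eta$ interchanged (or with reversed $\mathbb{R}$-speeds, exploiting that the $\mathbb{R}$-contribution is odd while the $M$-contribution is what we want) yields the reverse inequality, hence equality. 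Conversely, if the stated derivative identity holds, then for any two geodesics in $M \times_2 \mathbb{R}$ one writes each as a product of an $M$-geodesic and an $\mathbb{R}$-geodesic, computes the one-sided derivative of $\tilde d^2$ via the splitting, and checks that the "$\le 0$" condition is symmetric because each summand — the $M$-part by hypothesis, the $\mathbb{R}$-part because $\mathbb{R}$ is Euclidean — is symmetric; this gives $(SO)$ for $M \times_2 \mathbb{R}$, i.e. $(SO^*)$ for $M$.

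The main obstacle I anticipate is the bookkeeping in the splitting computation: one must verify that the one-sided derivative of $\tilde d^2$ along a product geodesic really is the sum of the one-sided derivative of $d^2$ along the $M$-factor and the (two-sided, since Euclidean) derivative along the $\mathbb{R}$-factor, with the correct speed normalizations when the product geodesic is unit speed. This requires knowing that a unit speed geodesic in $M \times_2 \mathbb{R}$ is precisely a pair $(\gamma$ run at speed $\cos\theta$, line run at speed $\sin\theta)$ for some fixed $\theta$, and that the difference quotient of $\tilde d^2$ is dominated appropriately so the limit passes through the sum — both of which follow from the explicit form of $\tilde d$ and the one-sided differentiability guaranteed by strict $p$-convexity. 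A secondary subtlety is making sure the limit as the $\mathbb{R}$-speed tends to $0$ interchanges correctly with the one-sided limit in $t$; this is handled by the monotonicity of convex difference quotients, which makes the convergence monotone and hence uniform enough.
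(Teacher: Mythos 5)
Your proposal follows essentially the same route as the paper: lift to $M\times_2\mathbb{R}$, tilt a geodesic into the $\mathbb{R}$-factor so that the basepoint becomes the nearest point (with strict $p$-convexity converting the sign of a one-sided derivative at the basepoint into the global nearest-point statement), invoke $(SO^*)$ to reverse the orthogonality, read off a one-sided inequality between the two derivatives, and then symmetrize in $\gamma$ and $\eta$; the converse direction (the identity passes to $L^2$-products and, with strict $p$-convexity, yields $(SO)$ there) is also the paper's.

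The one step that does not survive scrutiny is ``let the $\mathbb{R}$-speeds tend to zero; the cross terms from $\mathbb{R}$ vanish in the limit.'' Write the tilted curves as $s\mapsto(\gamma_s,as)$ and $t\mapsto(\eta_t,bt)$. Since $\tilde d^2\bigl((\eta_1,b),(\gamma_s,as)\bigr)=d^2(\eta_1,\gamma_s)+(b-as)^2$, the orthogonality you impose pins the product $ab$ to $\tfrac12\,\partial_s^+d^2(\eta_1,\gamma_s)|_{s=0}$, a fixed and generally nonzero constant, so the two speeds cannot both be sent to zero; and the cross term must not vanish, because after applying $(SO^*)$ the reversed condition reads $\partial_t^+d^2(\eta_t,\gamma_1)|_{t=0}-2ab\ge0$, i.e.\ the surviving cross term $2ab$ \emph{is} the right-hand side of the inequality you want. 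If it vanished you would only learn $\partial_t^+d^2(\eta_t,\gamma_1)|_{t=0}\ge0$. No limit is needed: fix $b=1$ and $a=\tfrac12\,\partial_s^+d^2(\eta_1,\gamma_s)|_{s=0}$ (the paper's choice), conclude $\partial_t^+d^2(\eta_t,\gamma_1)|_{t=0}\ge\partial_s^+d^2(\eta_1,\gamma_s)|_{s=0}$, and exchange the roles of $\gamma$ and $\eta$ for the reverse inequality. A secondary point you gloss over (as does the paper): the definition of $\perp$ requires the nearest-point property for \emph{every} point of the tilted $\eta$-curve, not only its endpoint, so you should verify that the sign condition at the endpoint suffices, or restrict to the relevant subsegment.
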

\begin{rem*}
This kind of symmetry condition was introduced in \cite{OhtaPalfia2014}
and has strong implications on the behavior of gradient flows of convex
functional. In particular, it shows that spaces satisfying $(SO^{*})$
must be Riemannian-like. Also note that this symmetry property is
stable under taking $L^{2}$-products justifying the terminology.
\end{rem*}
\begin{proof}
Note that $p$-convexity implies that $\partial_{s}^{+}d^{2}(\eta_{1},\gamma_{s})|_{s=0}$
and $\partial_{t}^{+}d^{2}(\eta_{t},\gamma_{1})|_{t=0}$ exist. 

Denote by $\tilde{d}$ the $L^{2}$-product metric on $M\times\mathbb{R}$.
Then there is an $a\in\mathbb{R}$ such that 
\[
\partial_{s}^{+}\tilde{d}^{2}((\eta_{1},1),(\gamma_{s},a\cdot s))|_{s=0}=0.
\]
 Since $M\times_{2}\mathbb{R}$ is also strictly $p$-convex \cite{Foertsch2004},
we see that the closest point of $(\eta_{1},1)$ onto $s\mapsto(\gamma_{s},a\cdot s)$
is $(\eta_{0},0)=(\gamma_{0},0)$. 

Since the orthogonality in $M\times_{2}\mathbb{R}$ is symmetric we
must have 
\[
\partial_{s}^{+}\tilde{d}^{2}((\eta_{t},t),(\gamma_{1},a))|_{s=0}\ge0
\]
 implying 
\[
\partial_{s}^{+}d^{2}(\eta_{1},\gamma_{s})|_{s=0}\le\partial_{t}^{+}d^{2}(\eta_{t},\gamma_{1})|_{t=0}.
\]
 Exchanging the roles of $\gamma$ and $\eta$ we obtain 
\[
\lim_{t\to0^{+}}\frac{d^{2}(\eta_{t},\gamma_{1})-d^{2}(\eta_{0},\gamma_{1})}{t}=\lim_{s\to0^{+}}\frac{d^{2}(\eta_{1},\gamma_{s})-d^{2}(\eta_{1},\gamma_{0})}{s}.
\]
Conversely, if the commutativity condition holds then it also holds
for $M\times_{2}\mathbb{R}$. Together with strict $p$-convexity
one sees that $(SO)$ holds for $M\times_{2}\mathbb{R}$. Hence $M$
satisfies $(SO^{*})$. 
\end{proof}
\begin{rem*}
[Jensen's inequality] In \cite{Kuwae2013} Kuwae proved Jensen's
inequality for spaces satisfying $(SO)$. However, it seems that the
proof of \cite[Theorem 4.1]{Kuwae2013} requires the stronger condition
$(SO^{*})$. Indeed, \cite[Lemma 2.12]{Kuwae2013} cannot hold in
general, because if an $L^{2}$-product of two non-trivial spaces
satisfies $(SO)$ then both of its factors have to satisfy property
$(SO^{*})_{loc}$. Furthermore, a general $L^{p}$-product of smooth
spaces with property $(SO)$ can have at most dimension two. Therefore,
for general $p\ne2$, Jensen's inequality on higher dimensional spaces
seems still open.
\end{rem*}
In the following we focus only on the global version of $(SO)$. If
$(M,d)$ is \emph{locally convex}, i.e. each point admits a convex
neighborhood, then almost all results below hold with respect to their
local version. However, local convexity seems rather strong as there
are spaces without convex sets with interior. 

The following lemma gives an equivalent characterization of the symmetric
orthogonality property in terms of a weak form of non-expansiveness
of projections onto geodesics. 
\begin{lem}
\label{lem:SOiffA}The condition $(SO)$ is equivalent to the following
property $(A)$: For all weakly convex sets $C$ it holds 
\[
d(x_{C},y)\le d(x,y)
\]
for all $x\in M$, $y\in C$ and $x_{C}\in\pi_{C}(x)$. 
\end{lem}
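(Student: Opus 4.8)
The plan is to deduce both implications directly from the triangle inequality; no curvature hypothesis enters. The one preliminary observation, already recorded in the introduction, is the following: if $\sigma$ is a geodesic with $\sigma_{0}=p$ and $p\in\pi_{\eta}(\sigma_{1})$ for a geodesic $\eta$ with $\eta_{0}=p$, then $p\in\pi_{\eta}(\sigma_{t})$ for every $t\in[0,1]$, so that $\sigma\perp_{p}\eta$. Indeed, for all $s,t\in[0,1]$ the reverse triangle inequality together with $d(\sigma_{1},\eta_{s})\ge d(\sigma_{1},p)$ and $d(\sigma_{1},\sigma_{t})=(1-t)\,d(p,\sigma_{1})$ gives
\[
d(\sigma_{t},\eta_{s})\ge d(\sigma_{1},\eta_{s})-d(\sigma_{1},\sigma_{t})\ge t\,d(p,\sigma_{1})=d(p,\sigma_{t}).
\]

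For $(A)\Rightarrow(SO)$, suppose $\gamma\perp_{p}\eta$. By the definition of Birkhoff orthogonality this says exactly that $p\in\pi_{\eta}(\gamma_{t})$ for every $t\in[0,1]$, where $\pi_{\eta}$ denotes the projection onto the (weakly convex) image $\eta([0,1])$. Since $\eta_{s}\in\eta([0,1])$, property $(A)$ applied with $C=\eta([0,1])$, $x=\gamma_{t}$, $x_{C}=p$ and $y=\eta_{s}$ yields $d(p,\eta_{s})\le d(\gamma_{t},\eta_{s})$ for all $s,t\in[0,1]$. This is precisely $\eta\perp_{p}\gamma$, so $(SO)$ holds.

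For $(SO)\Rightarrow(A)$, let $C$ be weakly convex, $x\in M$, $y\in C$ and $x_{C}\in\pi_{C}(x)$, and write $p:=x_{C}$; the claim is trivial if $x\in C$, so assume $x\notin C$. Using weak convexity, choose a geodesic $\eta$ with $\eta_{0}=p$, $\eta_{1}=y$ and $\eta([0,1])\subseteq C$; since $d(x,\eta_{t})\ge d(x,C)=d(x,p)$ for all $t$, we have $p\in\pi_{\eta}(x)$. Let $\sigma$ be a geodesic from $\sigma_{0}=p$ to $\sigma_{1}=x$. By the preliminary observation $\sigma\perp_{p}\eta$, so $(SO)$ gives $\eta\perp_{p}\sigma$, i.e. $d(p,\eta_{s})\le d(\sigma_{t},\eta_{s})$ for all $s,t\in[0,1]$; taking $t=s=1$ yields $d(x_{C},y)=d(p,\eta_{1})\le d(\sigma_{1},\eta_{1})=d(x,y)$, which is property $(A)$.

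There is no serious obstacle here: the entire content is the bookkeeping identification of Birkhoff orthogonality $\gamma\perp_{p}\eta$ with the assertion ``$p\in\pi_{\eta}(\gamma_{t})$ for all $t$'' (so that $(A)$ is applicable with $x_{C}=p$), and, conversely, the recognition that the nearest point $p=x_{C}$ and the segment $\sigma$ from $p$ to $x$ form an orthogonal pair to which $(SO)$ applies. The only mildly delicate point is the preliminary observation, a one-line triangle-inequality computation. Finally, this weakly-convex formulation of $(A)$ agrees with the one stated in Theorem \ref{thm:Main1-SOeqNE}: the latter is the special case $C=\gamma([0,1])$, and conversely, for a general weakly convex $C$ one passes to a geodesic inside $C$ joining $x_{C}$ to $y$, exactly as in the proof of $(SO)\Rightarrow(A)$ above.
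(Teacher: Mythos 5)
Your proof is correct and follows essentially the same route as the paper: both directions identify Birkhoff orthogonality with the statement ``$x_C=p\in\pi_{\eta}(\gamma_t)$ for all $t$'' and then apply $(A)$ with $C=\eta([0,1])$, respectively apply $(SO)$ to the pair consisting of a geodesic from $x_C$ to $x$ and a geodesic from $x_C$ to $y$ inside $C$. The only difference is that you spell out the triangle-inequality ``preliminary observation'' (that $p\in\pi_{\eta}(\sigma_1)$ forces $\sigma\perp_p\eta$), which the paper uses implicitly, having noted it in the introduction.
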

\begin{rem*}
In \cite[Lemma 2.10]{Kuwae2013} Kuwae proved that property $(SO)$,
property $(B)$ in that paper, implies property $(A)$.
\end{rem*}
\begin{proof}
Assume first the symmetric orthogonality property $(SO)$ holds and
let $C$ be a weakly convex subset. Choose $x_{C}\in\pi_{C}(x)$ and
$y\in C$. Let $\eta$ be a geodesic connecting $x_{C}$ and $y$
in $C$ and $\gamma$ be a geodesic connecting $x_{C}$ and $x$.
Since $\eta_{t}\in C$ and $x_{C}\in\pi_{C}(\gamma_{s})$ we have
\[
d(x_{C},\gamma_{s})=d(\eta_{0},\gamma_{s})\le d(\eta_{t},\gamma_{s}),
\]
i.e. $\gamma\bot_{x_{C}}\eta$. Then the symmetric orthogonality $(SO)$
implies $\eta\bot_{x_{C}}\gamma$ which is nothing but 
\[
d(x_{C},y)=d(\gamma_{0},y)\le d(\gamma_{1},y)=d(x,y).
\]
Conversely, assume for all weakly convex set $C$ it holds 
\[
d(x_{C},y)\le d(x,y)
\]
for all $x\in M$, $y\in C$ and $x_{C}\in\pi_{C}(x)$. Take now two
geodesics $\gamma$ and $\eta$ with $\gamma\bot_{p}\eta$. Note that
$C=\cup_{t\in[0,1]}\{\eta_{t}\}$ is weakly convex and $\gamma_{0}\in\pi_{C}(x)$.
Thus if $x=\gamma_{t}$ and $y=\eta_{1}\in C$ then 
\[
d(\eta_{0},y)=d(p_{0},y)\le d(x,y)=d(\gamma_{t},y)
\]
implying $\eta\bot_{p_{0}}\gamma$. As $\gamma$ and $\eta$ are arbitrary
$(M,d)$ must have symmetric orthogonalities. 
\end{proof}

\section*{Non-expansive projections}

In this section we introduce the non-expansive projection property
and prove Theorem \ref{thm:Main1-SOeqNE}.
\begin{defn}
[Non-expansive Projections] We say a geodesic space $(M,d)$ satisfies
the \emph{non-expansive projection property} $(NE)$ if for all closed
weakly convex sets $C$ and all $x,y\in M$ it holds 
\[
d(x_{C},y_{C})\le d(x,y)
\]
whenever $x_{C}\in\pi_{C}(x)$ and $y_{C}\in\pi_{C}(y)$.
\end{defn}
\begin{rem*}
Property $(NE)$ is well-known for linear spaces, see \cite[Theorem 3]{Kakutani1939}
and \cite[Theorem 5.2]{Phelps1957} where it is shown that the only
higher dimensional Banach spaces with non-expansive projections are
Hilbert spaces, compare also with Lemma \ref{lem:Birkhoff-rigidity}. 
\end{rem*}
It is easy to see that a set must be weakly convex if the a projection
onto it is non-expansive. The following properties can be shown from
the non-expansive projection property $(NE)$. We leave the details
to the interested reader.
\begin{lem}
Assume $(M,d)$ has non-expansive projections $(NE)$. Then the following
holds:
\begin{itemize}
\item $(M,d)$ is uniquely geodesic
\item the projection map $\pi_{C}$ onto weakly convex sets $C$ is at most
single-valued. 
\item closed balls are strictly convex. 
\end{itemize}
In particular, any weakly convex set is convex and $\pi_{C}$ can
be regarded as a non-expansive map whenever $C$ is compact.
\end{lem}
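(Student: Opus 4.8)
The plan is to establish the three bullets in the order (2), (3), (1), and then deduce the two ``in particular'' statements, since the single-valuedness of projections is essentially free and is the only input needed for the rest.

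First, single-valuedness of $\pi_C$ is immediate: if $C$ is closed and weakly convex and $x_C,x_C'\in\pi_C(x)$, then applying $(NE)$ with $y=x$ gives $d(x_C,x_C')\le d(x,x)=0$, hence $x_C=x_C'$. In particular, since the image of any geodesic (and of any sub-geodesic) is compact and weakly convex, the nearest-point projection of a point onto such an image consists of at most one point.

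Next I would prove that every closed ball is strictly convex, in the form: if $\sigma:[0,1]\to M$ is a geodesic with $\sigma_0\ne\sigma_1$ and $z\in M$, then $d(z,\sigma_s)<\max\{d(z,\sigma_0),d(z,\sigma_1)\}$ for every $s\in(0,1)$. Assume this fails. Put $\rho:=\max_{s\in[0,1]}d(z,\sigma_s)$; a short check (distinguishing whether $\rho$ exceeds $\max\{d(z,\sigma_0),d(z,\sigma_1)\}$ or not) shows that $\rho>0$ and that $\rho$ is attained at some interior parameter $s_1\in(0,1)$. Now there are two cases. If the closed set $T:=\{s:d(z,\sigma_s)=\rho\}$ contains a nondegenerate interval, the corresponding closed sub-geodesic lies entirely on the metric sphere $\{w:d(z,w)=\rho\}$, so every one of its points is a nearest point of that sub-geodesic to $z$, contradicting single-valuedness. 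Otherwise $T$ has empty interior, so the minima $\mu_0$ of $d(z,\sigma_\cdot)$ over $[0,s_1]$ and $\mu_1$ over $[s_1,1]$ are both strictly below $\rho$; choose $v\in(\max\{\mu_0,\mu_1\},\rho)$ and let $[\alpha,\beta]$ be the connected component of the closed set $\{s:d(z,\sigma_s)\ge v\}$ that contains $s_1$. Continuity of $d(z,\sigma_\cdot)$ and the choice of $v$ force $0<\alpha<s_1<\beta<1$, together with $d(z,\sigma_\alpha)=d(z,\sigma_\beta)=v$ and $d(z,\sigma_s)\ge v$ throughout $[\alpha,\beta]$; hence both $\sigma_\alpha$ and $\sigma_\beta$ realize the minimal distance from $z$ to the compact weakly convex set $\sigma([\alpha,\beta])$, again contradicting single-valuedness. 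I expect this ``superlevel-set component'' step to be the only delicate point: the obvious attempts, namely projecting $z$ onto all of $\sigma([0,1])$ or onto the half lying on one side of the outermost point, yield only inequalities of the type ``$(\text{something})\le\rho$'' that already follow from the triangle inequality, so one genuinely has to manufacture a sub-geodesic at whose two endpoints $z$ is equidistant.

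Finally, uniqueness of geodesics follows from strict convexity of balls. If $m_1\ne m_2$ were two midpoints of a pair $a,b$, then a geodesic $\sigma$ joining them would, by strict convexity of $\bar{B}(a,\frac{1}{2}d(a,b))$ and $\bar{B}(b,\frac{1}{2}d(a,b))$ (both containing $m_1$ and $m_2$), satisfy $d(a,\sigma_s)<\frac{1}{2}d(a,b)$ and $d(b,\sigma_s)<\frac{1}{2}d(a,b)$ for $s\in(0,1)$, whence $d(a,b)\le d(a,\sigma_s)+d(\sigma_s,b)<d(a,b)$, a contradiction; so midpoints are unique, and iterating over dyadic parameters together with continuity of geodesics shows that two geodesics with the same endpoints agree on a dense set and hence coincide. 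Once $(M,d)$ is uniquely geodesic, ``weakly convex'' and ``convex'' coincide by definition; and for a compact (equivalently, compact weakly convex) set $C$, the set $\pi_C(x)$ is nonempty by compactness, a single point by the first step, and depends $1$-Lipschitzly on $x$ by $(NE)$, so $\pi_C$ is a bona fide non-expansive map $M\to C$.
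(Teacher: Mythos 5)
The paper offers no proof of this lemma (the details are explicitly left to the reader), so there is no argument of the paper to compare yours against; judged on its own, your proof is correct and complete. The ordering you choose is the efficient one: applying $(NE)$ with $y=x$ gives at-most-single-valuedness of $\pi_{C}$ for free, and the other two bullets are then extracted from that. The only step with genuine content is, as you say, the superlevel-set argument for strict convexity of balls, and it holds up: continuity of $s\mapsto d(z,\sigma_{s})$ at the interior maximizer $s_{1}$ makes the component $[\alpha,\beta]$ nondegenerate with $0<\alpha<s_{1}<\beta<1$, maximality of the component forces $d(z,\sigma_{\alpha})=d(z,\sigma_{\beta})=v$, and then $\sigma_{\alpha}\ne\sigma_{\beta}$ are two nearest points of $z$ on the compact weakly convex set $\sigma([\alpha,\beta])$, contradicting single-valuedness; the interval case $T\supset[a,b]$ is handled the same way. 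The deduction of unique midpoints from strict balls and the dyadic bootstrap to unique geodesics are standard and correct, as are the two ``in particular'' statements. One cosmetic point: the second bullet speaks of arbitrary weakly convex sets while $(NE)$ is only stated for closed ones; if $C$ is weakly convex but not closed and $x_{C}\ne x_{C}'\in\pi_{C}(x)$, join them by a geodesic $\eta$ inside $C$ and note that both are nearest points of $x$ on the compact set $\eta([0,1])$, which reduces to the closed case you treated. This is a one-line addendum, not a gap.
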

We first observe that the non-expansive projection property $(NE)$
is stronger than the symmetric orthogonality property $(SO)$.
\begin{prop}
\label{prop:CPtoSO}If $(M,d)$ satisfies $(NE)$ then it also satisfies
$(SO)$. 
\end{prop}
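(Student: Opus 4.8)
The plan is to deduce $(SO)$ from $(NE)$ essentially for free. The point is that $(NE)$ compares the projections of two \emph{arbitrary} points, whereas $(SO)$ — via property $(A)$ of Lemma \ref{lem:SOiffA} — only concerns the one-sided situation in which one of the two points already lies in the convex set; but then that point is its own nearest point, so $(NE)$ specialises to exactly the inequality needed.

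Concretely, I would argue directly. Let $\gamma,\eta$ be geodesics with $\gamma_0=\eta_0=p$ and assume $\gamma\perp_p\eta$; the goal is $\eta\perp_p\gamma$. Put $C=\eta([0,1])$, which is weakly convex (the image of any geodesic is weakly convex) and compact, hence closed, so $(NE)$ applies to it. Unwinding the definition, $\gamma\perp_p\eta$ says that for every $t\in[0,1]$ the point $p$ is a nearest point of $\gamma_t$ on $\eta$, i.e. $p\in\pi_C(\gamma_t)$; and for every $s\in[0,1]$ we trivially have $\eta_s\in\pi_C(\eta_s)$ since $\eta_s\in C$. Applying $(NE)$ to $x=\gamma_t$ and $y=\eta_s$ with the admissible choices $x_C=p$, $y_C=\eta_s$ gives
\[
d(p,\eta_s)=d(x_C,y_C)\le d(x,y)=d(\gamma_t,\eta_s),
\]
and since $s,t\in[0,1]$ are arbitrary this is precisely $\eta\perp_p\gamma$. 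As $\gamma,\eta$ were arbitrary, $(SO)$ holds.

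If one prefers a more structural presentation, the same computation shows that $(NE)$ implies property $(A)$ for closed (e.g. compact) weakly convex sets, which is all that the proof of Lemma \ref{lem:SOiffA} uses to derive $(SO)$: given such a $C$, $x\in M$, $y\in C$ and $x_C\in\pi_C(x)$, apply $(NE)$ with $y_C=y$, then invoke Lemma \ref{lem:SOiffA}. I do not anticipate a genuine obstacle. The only things to verify are that $\eta([0,1])$ is an admissible set for $(NE)$ — weakly convex and closed, as noted — and that one keeps straight which geodesic is being projected onto which; the orthogonality relation is a priori asymmetric, so the bookkeeping of the roles of $x$, $y$, $x_C$, $y_C$ is the only place where a slip could occur.
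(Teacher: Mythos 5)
Your argument is correct and is essentially identical to the paper's own proof: both apply $(NE)$ to $C=\eta([0,1])$ with $x=\gamma_t$, $y=\eta_s$, using that $\gamma_0\in\pi_C(\gamma_t)$ by hypothesis and $\eta_s\in\pi_C(\eta_s)$ trivially, to obtain $d(\gamma_0,\eta_s)\le d(\gamma_t,\eta_s)$, which is $\eta\perp_{\gamma_0}\gamma$. The bookkeeping of roles is handled correctly.
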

\begin{proof}
Assume the non-expansive projection property $(NE)$ holds and $\gamma\bot_{\gamma_{0}}\eta$,
i.e. $\gamma_{0}=\pi_{\eta}(\gamma_{s})$, $s\in[0,1]$. Since $\eta$
is (weakly) convex, by property $(NE)$ we have 
\[
d(\gamma_{0},\eta_{t})\le d(\gamma_{s},\eta_{t})\quad\text{for all }t\in[0,1].
\]
This implies that $\gamma_{0}=\pi_{\gamma}(\eta_{t})$ and hence $\eta\bot_{\gamma_{0}}\gamma$.
Because $\gamma$ and $\eta$ are arbitrary we see that the symmetric
orthogonality property $(SO)$ holds. 
\end{proof}
The converse of the statement does not hold, not even if every ball
is strictly convex. Indeed, if $(M,d)$ is a closed ball of radius
$R<\frac{\pi}{2}$ on the sphere $\mathbb{S}^{n}$ with standard metric
then its balls are strictly convex. However, the projection onto a
non-constant geodesic is never non-expansive.

Assuming Busemann convexity it is even possible to prove equivalence
of the two properties.
\begin{thm}
\label{thm:NEiffSO}A Busemann convex metric space has symmetric orthogonalities
$(SO)$ if and only if it has non-expansive projections $(NE)$. 
\end{thm}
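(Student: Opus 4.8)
By Proposition~\ref{prop:CPtoSO} the implication $(NE)\Rightarrow(SO)$ already holds in any geodesic space, so the entire content of the theorem is the converse $(SO)\Rightarrow(NE)$ under Busemann convexity. By Lemma~\ref{lem:SOiffA} we may freely use property $(A)$ instead of $(SO)$: the nearest-point projection onto any (weakly) convex set $C$ satisfies $d(x_C,y)\le d(x,y)$ for every $y\in C$. The goal is to upgrade this ``one point in $C$'' estimate to the genuine two-point estimate $d(x_C,y_C)\le d(x,y)$ for $x_C\in\pi_C(x)$, $y_C\in\pi_C(y)$. Note that in a Busemann convex space $\pi_C$ onto a closed convex $C$ is single-valued, so there is no multivaluedness to worry about.

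\textbf{Main step.} Fix a closed convex set $C$ and points $x,y\in M$ with projections $x_C=\pi_C(x)$, $y_C=\pi_C(y)$. The natural strategy is to interpolate: let $\sigma:[0,1]\to C$ be the geodesic from $x_C$ to $y_C$ (which lies in $C$ by convexity), and try to show that the function $f(t):=d\big(\gamma^x_t,\gamma^y_t\big)$ is controlled, where $\gamma^x$ is the geodesic from $x_C$ to $x$ and $\gamma^y$ the geodesic from $y_C$ to $y$. Busemann convexity gives convexity of $t\mapsto d(\gamma^x_t,\gamma^y_t)$ along geodesics \emph{with common endpoint}, but here the two geodesics share no endpoint, so instead one should use the full Busemann inequality in the form: for any two geodesics, $t\mapsto d(\cdot_t,\cdot_t)$ is convex. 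Concretely, I would argue that $x_C\in\pi_\sigma(x)$ and $y_C\in\pi_\sigma(y)$ as well (since $\sigma([0,1])\subset C$ and the projection onto the smaller set can only be farther, but both realize the same minimal distances to their respective projection points $x_C, y_C$ which lie on $\sigma$). Then property $(A)$ applied to the weakly convex set $\sigma([0,1])$ yields $d(x_C,\sigma_t)\le d(x,\sigma_t)$ and $d(y_C,\sigma_t)\le d(y,\sigma_t)$ for all $t$; i.e. $\gamma^x\bot_{x_C}\sigma$ and $\gamma^y\bot_{y_C}\sigma$, and by $(SO)$ also $\sigma\bot_{x_C}\gamma^x$ and $\sigma\bot_{y_C}\gamma^y$.

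\textbf{Closing the argument.} Now consider the geodesic quadrilateral with vertices $x_C,y_C,y,x$. Using $\sigma\bot_{y_C}\gamma^y$ we get $d(x_C,y_C)\le d(x_C,\gamma^y_s)$ for all $s$, in particular $d(x_C,y_C)\le d(x_C,y)$; combined with $(A)$ for the geodesic $\gamma^x$ — more precisely, the relation $d(x_C,y)\le d(x,y)$ would follow if $x_C\in\pi_{[\text{geodesic }x_C y]}$... — this is where care is needed. The cleaner route: apply Lemma~\ref{lem:Busemann-convex-affine-rigidity} or a direct Busemann estimate to the two geodesics $\gamma^x:[0,1]\to M$ from $x_C$ to $x$ and the geodesic $\tau$ from $y_C$ to $x$. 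Since $\sigma\bot_{x_C}\gamma^x$, we have $d(x_C,\gamma^x_t)\le d(y_C,\gamma^x_t)$, i.e. $x_C\in\pi_{\text{geo}(x_C,y_C)}(\gamma^x_t)$, hence $\gamma^x\bot_{x_C}\sigma$ gives via $(A)$ applied along $\gamma^x$ that $t\mapsto d(\gamma^x_t,y_C)$ is nondecreasing, so $d(x_C,y_C)\le d(x,y_C)$. Symmetrically, using $\sigma\bot_{y_C}\gamma^y$ and $(A)$ along $\gamma^y$, $t\mapsto d(\gamma^y_t,x)$ is nondecreasing, giving $d(x,y_C)\le d(x,y)$. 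Chaining, $d(x_C,y_C)\le d(x,y_C)\le d(x,y)$, which is exactly $(NE)$.

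\textbf{Expected obstacle.} The delicate point is the repeated claim ``$(A)$ along a geodesic $\gamma^x$ implies $t\mapsto d(\gamma^x_t,z)$ is monotone in $t$'' for a point $z$ with $x_C\in\pi_{\text{geo}(x_C,z)}(\gamma^x_t)$: this requires knowing the projection of $\gamma^x_t$ onto the geodesic $[x_C,z]$ is exactly $x_C$ \emph{for every} $t$, which uses $\sigma\bot_{x_C}\gamma^x$ together with the fact that $[x_C,y_C]\subset C$ so the projection onto this shorter geodesic still lands at $x_C$. Verifying that $x_C$ (resp. $y_C$) remains the foot of the perpendicular from $\gamma^x_t$ (resp. $\gamma^y_t$) to the \emph{segment} $[x_C,y_C]$ rather than just to $C$, and that Busemann convexity forbids the projection from ``sliding'' along the segment, is the technical heart; I expect this to follow from convexity of $s\mapsto d(\gamma^x_t,\sigma_s)$ (Busemann) having its minimum at $s=0$ because $d(x_C,\gamma^x_t)\le d(\sigma_s,\gamma^x_t)$ for all $s$ by $(A)$, so the convex function is minimized at the endpoint. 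Once this is pinned down the rest is a short chain of inequalities.
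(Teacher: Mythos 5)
Your reduction to $(SO)\Rightarrow(NE)$ and the opening moves are fine: since $x_C\in\pi_C(\gamma^x_t)$ for every $t$ and $\sigma([0,1])\subset C$, you correctly get $\gamma^x\perp_{x_C}\sigma$, hence by $(SO)$ also $\sigma\perp_{x_C}\gamma^x$, which yields $d(x_C,y_C)\le d(\gamma^x_t,y_C)$ for all $t$, in particular $d(x_C,y_C)\le d(x,y_C)$, and symmetrically $d(x_C,y_C)\le d(x_C,y)$. The proof breaks at the closing chain: the inequality $d(x,y_C)\le d(x,y)$ is simply false, already in the Euclidean plane. Take $C$ the $x$-axis, $x=(0,1)$, $y=(10,1)$; then $y_C=(10,0)$ and $d(x,y_C)=\sqrt{101}>10=d(x,y)$ (note this example even has $d(x,C)=d(y,C)$, so no normalization saves it). The purported justification — that $t\mapsto d(\gamma^y_t,x)$ is nondecreasing — would require $y_C\in\pi_{\gamma^y}(x)$, i.e.\ that $y_C$ is the foot of $x$ on the segment $[y_C,y]$; nothing you established implies this, since $\sigma\perp_{y_C}\gamma^y$ only controls distances from points \emph{of $\sigma$} to $\gamma^y$, and $x\notin\sigma$. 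In the example above the function $t\mapsto d(\gamma^y_t,x)$ is in fact genuinely increasing-then-irrelevant: its minimum over the segment is attained at $y$, not at $y_C$. So the two inequalities you can legitimately extract ($d(x_C,y_C)\le d(x,y_C)$ and $d(x_C,y_C)\le d(x_C,y)$) are true but strictly weaker than $(NE)$ — in the example they read $10\le\sqrt{101}$ and give no contradiction, while $(NE)$ is the sharper statement $10\le 10$.

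The paper closes this gap with machinery your proposal does not contain. It first reduces to the case $d(x,C)=d(y,C)=m$ by projecting onto the tube $C_m=\bar B_m(C)$ (convex by Busemann convexity) and applying property $(A)$ there. It then runs a genuinely iterative argument: writing $x_t,y_t$ for the points at distance $tm$ from $C$ on the two projection geodesics and $\gamma^{(t)}$ for the geodesic joining them, Busemann convexity forces $d(\gamma^{(t)}_{1/2},C)<tm$ unless the convex hull of $\gamma^{(t)}$ and $\gamma^{(0)}$ is flat (Lemma \ref{lem:Busemann-convex-affine-rigidity}), in which case $d(x_t,y_t)$ is already constant; in the non-flat case one applies the same projection estimate to the pairs $(z,x)$ and $(z,y)$ with $z=\gamma^{(1)}_{1/2}$ and the triangle inequality to strictly decrease the distance, and iterates down to $C$. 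Some argument of this global, quantitative kind is unavoidable: a single application of $(SO)$ at the feet $x_C,y_C$ cannot see how the two projection geodesics spread apart along their whole length.
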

\begin{rem*}
(1) As $M\times_{2}\mathbb{R}$ is Busemann convex spaces whenever
$M$ is Busemann convex, we also see that $(SO^{*})$ and $(NE^{*})$
are equivalent for Busemann convex spaces.

(2) An earlier version of this note also proved the equivalence of
$(SO)$ and $(NE)$ for Pedersen convex metric spaces, i.e. geodesic
spaces such that 
\[
\bigcup_{x\in C}\bar{B}_{\epsilon}(x)
\]
is weakly convex for all closed (weakly) convex sets $C$.
\end{rem*}
\begin{proof}
It suffices to show that $(SO)$ implies $(NE)$. Let $x,y\in M$
and $C$ be a closed convex sets such that $\pi_{C}(x)$ and $\pi_{C}(y)$
are non-empty. Let $x_{C}\in\pi_{C}(x)$ and $y_{C}=\pi_{C}(y)$ and
assume by exchanging $x$ and $y$ if necessary that $m=d(x,C)\le d(y,C)$. 

Set $C_{r}=\bar{B}_{r}(C)$ and note that $\pi_{C_{r}}(x)$ and $\pi_{C_{r}}(y)$
are non-empty as they contain points on the geodesics connecting $x$
and $x_{C}$ and resp. $y$ and $y_{C}$. Note that each $C_{r}$
is convex by Busemann convexity. Denote the projection of $x$ and
$y$ onto $C_{r}$ by $x_{r}$ and $y_{r}$, respectively. Since $x=x_{m}$
and $C_{m}$ is convex, the geodesic $\eta$ connecting $x$ and $y_{m}$
is in $C_{m}$. In particular, $\pi_{\eta}(y)=\pi_{C_{m}}(y)$. Now
property $(SO)$ (see Lemma \ref{lem:SOiffA}) implies
\[
d(x,y_{m})\le d(x,y).
\]
Replacing $y$ by $y_{m}$ we see that it suffices to show that $d(x_{C},y_{C})\le d(x,y)$
whenever $m=d(x,C)=d(y,C)$. 

Let $t\mapsto x_{t}$ and $t\mapsto y_{t}$ be $[0,m]$-parametrized
geodesics connecting $x_{C}$ and $x$, and $y_{C}$ and $y$, respectively.
Note that $x_{t}\in\pi_{C_{mt}}(x)$ and $y_{t}\in\pi_{C_{mt}}(y)$. 

Denote by $\gamma^{(t)}$ the geodesic connecting $x_{t}$ and $y_{t}$
for $t\in[0,1]$. Since 
\[
s\mapsto d(\gamma_{s}^{(t)},\gamma_{s}^{(0)})
\]
is convex by Busemann convexity and $\gamma^{(0)}$ in $C$ we see
that 
\[
d(\gamma_{s}^{(t)},C)\le(1-s)d(x_{t},C)+sd(y_{t},C)=mt\quad\mbox{for all }s\in[0,1].
\]
Furthermore, if 
\[
d(\gamma_{s}^{(t)},C)=mt\quad\mbox{for some }s\in[0,1]
\]
 then 
\[
s\mapsto d(\gamma_{s}^{(t)},\gamma_{s}^{(0)})=mt\quad\mbox{for all }s\in[0,1]
\]
 implying that the closed convex hull of $\gamma^{(t)}([0,1])\cup\gamma^{(0)}([0,1])$
is isometric to a closed convex set in $\mathbb{R}^{2}$ equipped
with a strictly convex norm, see Lemma \ref{lem:Busemann-convex-affine-rigidity}.
But in this two-dimensional setting $s\mapsto d(\gamma_{s}^{(t)},\gamma_{s}^{(0)})$
is constant if and only if $t\mapsto d(\gamma_{0}^{(t)},\gamma_{1}^{(t)})=d(x_{t},y_{t})$
is constant. In particular, $d(x_{C},y_{C})=d(x_{t'},x_{t'})$ for
all $t\in[0,t']$. 

Thus, replacing $C$ by $C_{t}$ we may assume that $m_{0}=d(\gamma_{\frac{1}{2}}^{(t)},C)<tm$
for all $t\in(0,1]$.  Observe by applying an argument as above to
the pairs $(z,x)$ and $(z,y)$ where $z=\gamma_{\frac{1}{2}}^{(1)}$
we obtain 
\begin{align*}
d(x_{m_{0}},z) & \le d(x,z)\\
d(y_{m_{0}},z) & \le d(y,z).
\end{align*}
Thus by triangle inequality 
\[
d(x_{m_{0}},y_{m_{0}})<d(x,y)
\]
where the strict inequality is due to the fact that $z$ cannot be
midpoint of $x_{m_{0}}$ and $y_{m_{0}}$. Replacing $(x,y)$ by $(x_{m_{0}},y_{m_{0}})$
we obtain inductively a $(m_{n})_{n\in\mathbb{N}}$ with $m_{n}=d(x_{m_{n}},C)$
and 
\[
\lim_{n\to\infty}m_{n}=\lim_{n\to\infty}d(z_{m_{n}},C)
\]
where $z_{t}$ is the midpoint of $x_{t}$ and $y_{t}$. By assumption
$d(z_{t},C)<m_{n}$ implying $(x_{m_{n}},x_{m_{n}})\to(x_{0},y_{0})=(x_{C},y_{C})$.
Thus we obtain the desire inequality 
\[
d(x_{C},y_{C})<d(x,y).
\]
\end{proof}

\section*{Busemann convex spaces with non-expansive projection property $(NE^{*})$}

In this section we are going to prove the second main result of this
note.
\begin{thm}
\label{thm:BusemannSO-CAT0}Let $(M,d)$ be a complete geodesic space
which is uniformly $\infty$-convex. Then the following are equivalent:

\begin{enumerate}
\item $(M,d)$ is a $CAT(0)$-space
\item $(M,d)$ is a Busemann convex space satisfying the stable non-expansive
projection property $(NE^{*})$
\item $(M,d)$ is a Busemann convex space satisfying the stable symmetric
orthogonality property $(SO^{*})$.
\end{enumerate}
\end{thm}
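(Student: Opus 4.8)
The implications $(2)\Rightarrow(3)$ and $(3)\Rightarrow(2)$ follow immediately from the equivalence of $(SO)$ and $(NE)$ for Busemann convex spaces (Theorem~\ref{thm:NEiffSO}), applied to $M\times_2\mathbb{R}$, which is Busemann convex whenever $M$ is; so the content is $(1)\Rightarrow(2)$ and $(2)\Rightarrow(1)$. The forward direction $(1)\Rightarrow(2)$ is soft: a $CAT(0)$-space is Busemann convex, the $L^2$-product of $CAT(0)$-spaces is $CAT(0)$, and nearest point projections onto closed convex subsets of a $CAT(0)$-space are single-valued and non-expansive (the standard reference is \cite[Chapter~2]{Bacak2014a}); hence $M\times_2\mathbb{R}$ has $(NE)$, i.e. $M$ has $(NE^*)$. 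The real work is $(2)\Rightarrow(1)$.

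\textbf{The hard direction $(2)\Rightarrow(1)$.} The strategy is to reduce the $CAT(0)$ inequality in $M$ to a statement about tangent cones, where the geometry linearizes and Birkhoff's rigidity (Lemma~\ref{lem:Birkhoff-rigidity}) can bite. First I would observe that $(NE^*)$ passes to tangent cones: since $(NE^*)$ for $M$ means $(SO)$ for $M\times_2\mathbb{R}$ (Lemma~\ref{lem:SOiffA}, Proposition~\ref{prop:CPtoSO}), and symmetric orthogonality is an inequality between distances of endpoints of geodesics, it survives ultralimit blow-ups (this is the content alluded to in Lemma~\ref{lem:ultralimit-so}); likewise uniform $\infty$-convexity passes to the blow-up (Lemma~\ref{lem:ultralimit-uni-conv}), so the tangent cone $T_x^{(o)}M$ is uniquely geodesic, and by the hypothesis of Theorem~\ref{thm:Main2-BCplusSOequivCAT0} it is Busemann convex. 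Thus $T_x^{(o)}M$ is a Busemann convex, uniquely geodesic space with $(SO^*)$, i.e. it satisfies the same hypotheses as $M$ but is now a cone. By Lemma~\ref{lem:flat-sector}, any two ultrarays from $\mathbf{0}_x$ span a flat sector inside a strictly convex normed plane; gluing two such sectors along a common ray and invoking Lemma~\ref{lem:Euclidean-double-sector} produces, for a line $\bar\gamma$ through $\mathbf{0}_x$ and a ray $\bar\eta$, the Euclidean second-order inequality. The point is then to show these flat sectors fit together coherently: taking three ultrarays and using $(SO^*)$ to control the angles between them (via Proposition~\ref{prop:two-sided-angles}, which applies since Busemann convex cones are strictly $2$-convex in the relevant two-dimensional flat subsets), one forces the span of any finite collection of ultrarays to be isometric to a Euclidean cone, i.e. $T_x^{(o)}M$ is a normed space with $(SO^*)$. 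Then Corollary~\ref{cor:stableSO}, or directly Lemma~\ref{lem:Birkhoff-rigidity} applied to $T_x^{(o)}M\times_2\mathbb{R}$, shows $T_x^{(o)}M$ is an inner product space, i.e. a $CAT(0)$ cone.

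\textbf{From tangent cones back to $M$.} Once every tangent cone $T_x^{(o)}M$ is a Hilbert space (in particular $CAT(0)$), I would invoke the standard principle that a Busemann convex (hence uniquely geodesic, complete) space whose tangent cones are all $CAT(0)$ is itself $CAT(0)$. Concretely: the $CAT(0)$ inequality $d(x,m)^2 \le \tfrac12 d(x,y)^2 + \tfrac12 d(x,z)^2 - \tfrac14 d(y,z)^2$ can be verified by a blow-up/rescaling argument at a point, using that in a Busemann convex space the deficit in this inequality is controlled to second order by the corresponding deficit in the tangent cone, which vanishes. Alternatively, and perhaps cleaner, one shows that every geodesic triangle in $M$ has the ``four-point'' $CAT(0)$ property by comparison with its infinitesimal model; Busemann convexity gives the requisite first-order control to make the blow-up comparison rigorous. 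The main obstacle I anticipate is precisely the coherence step in the previous paragraph — upgrading ``any two ultrarays span a flat sector'' to ``all of $T_x^{(o)}M$ is a normed space.'' Lemma~\ref{lem:flat-sector} handles pairs, but stitching triples (and then arbitrary finite configurations) into a single flat cone requires showing the angle pairing coming from $(SO^*)$ is bilinear, which is exactly where Birkhoff-type rigidity must be deployed carefully; the asymmetry issues flagged in the remark after Proposition~\ref{prop:Finsler-SO} are a warning that one must use $(SO^*)$ — the stable version — and not merely $(SO)$, at this step.
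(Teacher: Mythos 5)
Your outline of the soft implications is fine: $(2)\Leftrightarrow(3)$ is Theorem~\ref{thm:NEiffSO} applied to $M\times_2\mathbb{R}$, and $(1)\Rightarrow(2)$ is standard $CAT(0)$ theory. The setup for $(2)\Rightarrow(1)$ is also on target — blow up at a point, use Lemmas~\ref{lem:ultralimit-so} and \ref{lem:ultralimit-uni-conv} to transfer $(SO^*)$ and unique geodesy to the tangent cone, and use Birkhoff rigidity there. But your proposal has a genuine gap exactly where you flag the "main obstacle": you route the argument through the claim that the \emph{entire} tangent cone $T_x^{(o)}M$ is a Hilbert space, which requires stitching the flat sectors of Lemma~\ref{lem:flat-sector} coherently across triples and arbitrary finite configurations of ultrarays. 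You do not carry this out, and it is not at all clear how to: Lemma~\ref{lem:flat-sector} rests on Busemann's affine rigidity for a \emph{pair} of geodesics, and there is no mechanism in the paper (Proposition~\ref{prop:two-sided-angles} does not supply one) to force three sectors to assemble into a single normed plane or cone. As written, $(2)\Rightarrow(1)$ is incomplete.

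The paper's proof shows this global coherence is unnecessary. Fix $x,y,z$, let $m$ be the midpoint of $x,y$, let $\gamma:[-1,1]\to M$ join $x$ to $y$ and $\eta$ join $m$ to $z$, and consider the rescaled quadruple $\gamma_{-\lambda_n},\gamma_{\lambda_n},m,\eta_{\lambda_n}$. Busemann convexity gives, via convexity of $t\mapsto d(\gamma_{\pm t},\eta_{t})$ vanishing at $t=0$, the explicit monotonicity
\[
\tfrac12 d_n(x_n,z_n)^2+\tfrac12 d_n(y_n,z_n)^2-d_n(m,z_n)^2-\tfrac14 d_n(x_n,y_n)^2\;\le\;\tfrac12 d(x,z)^2+\tfrac12 d(y,z)^2-d(m,z)^2-\tfrac14 d(x,y)^2,
\]
so the $CAT(0)$ deficit only has to be bounded below by $0$ in the tangent cone, and only for the configuration of one ultraline $\bar\gamma$ and one ultraray $\bar\eta$. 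For that, only the \emph{two} sectors $C^\pm$ spanned by $\bar\gamma^\pm$ and $\bar\eta$ are needed: each is flat by Lemma~\ref{lem:flat-sector}, each is a closed convex subset inheriting $(SO^*)$ and hence Euclidean by Corollary~\ref{cor:stableSO}, and Lemma~\ref{lem:Euclidean-double-sector} (Reshetnyak gluing of the two Euclidean sectors along $\bar\eta$, noting $\bar\gamma$ stays a line) yields the desired inequality. This is precisely the step your "stitching" was meant to replace, and it is also a sharper version of your vague "controlled to second order" claim in the final paragraph. To repair your proof, drop the attempt to identify $T_x^{(o)}M$ globally and substitute the two-sector gluing argument.
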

We first prove the following lemma on stability of the condition $(SO)$.
\begin{lem}
\label{lem:ultralimit-so}Let $(M_{n},d_{n},x_{n})_{n\in\mathbb{N}}$
be a sequence of geodesic spaces satisfying the symmetric orthogonality
$(SO)$ (resp. its stable version $(SO^{*})$). If the ultralimit
$\lim_{\omega}(M_{n},d_{n},x_{n})$ is uniquely geodesic and projections
onto compact convex sets are unique then it satisfies the symmetric
orthogonality property $(SO)$ (resp. its stable version $(SO^{*})$).
\end{lem}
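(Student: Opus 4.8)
The plan is to work directly from the definition of the ultralimit, transferring the orthogonality relation between $M = \lim_\omega(M_n, d_n, x_n)$ and the approximating spaces $M_n$. Suppose $\gamma$ and $\eta$ are geodesics in $M$ with $\gamma_0 = \eta_0 = p$ and $\gamma \perp_p \eta$. Since $M$ is uniquely geodesic and projections onto compact convex sets (in particular onto $\eta([0,1])$) are unique, and since $(M,d)$ being an ultralimit is automatically geodesic, we may — as remarked in the Preliminaries for the Busemann case, and more generally just by the construction of geodesics in ultralimits — represent $\gamma$ and $\eta$ as ultralimits of geodesics: choose endpoints $\gamma_1 = (a_n)$, $\eta_1 = (b_n)$ and $p = (p_n)$, let $\gamma^{(n)}$ be a geodesic in $M_n$ from $p_n$ to $a_n$ and $\eta^{(n)}$ a geodesic from $p_n$ to $b_n$, so that $\gamma_t = \lim_\omega \gamma^{(n)}_t$ and $\eta_t = \lim_\omega \eta^{(n)}_t$ pointwise in $t$.

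The crux is that $\gamma \perp_p \eta$ need not force $\gamma^{(n)} \perp_{p_n} \eta^{(n)}$ for $\omega$-almost every $n$ — the approximating geodesics may fail orthogonality by a small amount. To get around this, I would instead project: for each $n$ and each $t$, pick $q^{(n)}_t \in \pi_{\eta^{(n)}}(\gamma^{(n)}_t)$, which is a nearest point on the compact set $\eta^{(n)}([0,1])$, and set $q_t = \lim_\omega q^{(n)}_t$. A standard ultralimit continuity argument shows $q_t \in \pi_{\eta([0,1])}(\gamma_t)$ in $M$; since projections onto compact convex sets in $M$ are unique and $\gamma \perp_p \eta$ says $p = \pi_{\eta}(\gamma_t)$, we get $q_t = p$, i.e. $\lim_\omega d_n(q^{(n)}_t, p_n) = 0$ for every $t$. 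Now in each $M_n$ we have the genuine orthogonality $\gamma^{(n)}\!\restriction\!\text{(reparametrized from } q^{(n)}_t) \perp \eta^{(n)}$ at the foot point, so property $(SO)$ in $M_n$ applies and yields $d_n(q^{(n)}_t, \eta^{(n)}_s) \le d_n(\gamma^{(n)}_t, \eta^{(n)}_s)$ — wait, more carefully: $(SO)$ gives that $\eta^{(n)}$ restricted appropriately is orthogonal to the geodesic from $q^{(n)}_t$ to $\gamma^{(n)}_t$, hence $d_n(q^{(n)}_0, \eta^{(n)}_s)$-type comparisons hold. I would instead phrase the whole thing through Lemma~\ref{lem:SOiffA}, i.e. property $(A)$: in $M_n$, for the weakly convex set $\eta^{(n)}([0,1])$ and any $x \in M_n$ with foot point $q^{(n)} \in \pi(x)$, one has $d_n(q^{(n)}, y) \le d_n(x,y)$ for all $y$ on $\eta^{(n)}$. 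Passing to the ultralimit, $d(q_t, \eta_s) \le d(\gamma_t, \eta_s)$, and since $q_t = p$ this is exactly $d(p, \eta_s) \le d(\gamma_t, \eta_s)$, i.e. property $(A)$ holds in $M$ for sets of the form $\eta([0,1])$; then Lemma~\ref{lem:SOiffA} (applied in $M$, using unique geodesics) gives $\eta \perp_p \gamma$, hence $(SO)$.

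The cleanest route, therefore, is: (i) show $(A)$ passes to ultralimits for weakly convex sets $C_n$ whose ultralimit $C$ is the relevant (compact) convex set — the only input needed is continuity of distances under $\lim_\omega$ and that ultralimits of nearest points are nearest points, plus uniqueness of the limiting projection to pin down the foot point; (ii) invoke the equivalence $(SO)\Leftrightarrow(A)$ from Lemma~\ref{lem:SOiffA} at both ends. For the stable version $(SO^*)$, one simply runs the identical argument for the sequence $M_n \times_2 \mathbb{R}$, noting that $\lim_\omega(M_n \times_2 \mathbb{R}) = (\lim_\omega M_n) \times_2 \mathbb{R}$ and that unique geodesics and unique projections in $M$ are inherited by $M \times_2 \mathbb{R}$ (this last point is where one uses that $\mathbb{R}$ is strictly convex, so the $L^2$-product of a uniquely geodesic space with $\mathbb{R}$ remains uniquely geodesic with single-valued projections onto compact convex sets). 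The main obstacle is step (i): one must be careful that the foot points $q^{(n)}_t$, chosen with the axiom of choice, have an ultralimit lying in the limit set and realizing the limit distance, and that no geodesic-branching pathology in the $M_n$ spoils the passage — this is handled precisely by the hypotheses that the limit is uniquely geodesic and has unique projections onto compact convex sets.
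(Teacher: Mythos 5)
Your proposal is correct and follows essentially the same route as the paper: represent the limit geodesics as ultralimits of geodesics in $M_n$ (using unique geodesicity of the limit), pass to the true foot points $q^{(n)}_t\in\pi_{\eta^{(n)}}(\gamma^{(n)}_t)$ so that genuine orthogonality holds in each $M_n$, identify $\lim_\omega q^{(n)}_t$ with $p$ via uniqueness of projections in the limit, apply $(SO)$ in each $M_n$, and pass to the ultralimit. Phrasing the per-$n$ step through property $(A)$ and Lemma \ref{lem:SOiffA} instead of the definition of $(SO)$ is only a cosmetic difference.
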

\begin{proof}
The $*$-version follows by noting that 
\[
\left(\lim_{\omega}(M_{n},d_{n},x_{n})\times_{2}(\mathbb{R},|\cdot|,0)\right)=\left(\lim_{\omega}(M_{n},d_{n},x_{n})\right)\times_{2}(\mathbb{R},|\cdot|,0).
\]
Since $(M_{\omega},d_{\omega},x_{\omega})=\lim_{\omega}(M_{n},d_{n},x_{n})$
is uniquely geodesic, any geodesic in $(M_{\omega},d_{\omega})$ is
given by an ultralimit of a sequence geodesics $\gamma_{n}$. So it
suffices to show for geodesic $(\gamma_{n})$ and $(\eta_{n})$ in
$M_{\omega}$ with $(\gamma_{n})\bot_{(p_{n})}(\eta_{n})$ also $(\eta_{n})\bot_{(p_{n})}(\gamma_{n})$
holds. 

Assume $(\gamma_{n})\bot_{(p_{n})}(\eta_{n})$. Since projections
onto $(\eta_{n})([0,1])$ are unique and $p_{n}=\gamma_{n}(0)=\eta_{n}(0)$
we have 
\[
\lim_{\omega}d_{n}(p_{n},\gamma_{n}(t))<\lim_{\omega}d_{n}(\eta_{n}(s),\gamma_{n}(t))
\]
for all $s,t\in(0,1]$.

Fix $t\in(0,1]$ and let $q_{n}$ be a closest point of $\gamma_{n}(t)$
on $\eta_{n}$. Then $d_{n}(q_{n},\gamma_{n}(t))\le d_{n}(p_{n},\gamma_{n}(t))$
so that 
\[
\lim_{\omega}d_{n}(p_{n},\gamma_{n}(t))\le\lim_{\omega}d_{n}(q_{n},\gamma_{n}(t)).
\]
But the ultralimit of $(q_{n})$ is on the ultralimit of the geodesics
$(\eta_{n})$ implying that the ultralimits of $(p_{n})$ and $(q_{n})$
agree. 

Denote by $\tilde{\gamma}_{n}$ the geodesic connecting $q_{n}$ and
$\gamma_{n}(1)$. Then $\tilde{\gamma}_{n}\bot_{q_{n}}\eta_{n}$ so
that property $(SO)$ for $(M_{n},d_{n})$ implies $\eta_{n}\bot_{q_{n}}\tilde{\gamma}_{n}$.
In particular, since $\tilde{\gamma}_{n}(1)=\gamma_{n}(1)$ it holds
\[
d_{n}(\eta_{n}(s),q_{n})\le d_{n}(\eta_{n}(s),\gamma_{n}(1))\quad\mbox{for all }s\in[0,1].
\]

Combining the above we obtain for the ultralimit
\[
\lim_{\omega}d_{n}(\eta_{n}(s),p_{n})=\lim_{\omega}d_{n}(\eta_{n}(s),q_{n})\le\lim_{\omega}d_{n}(\eta_{n}(s),\gamma_{n}(t))
\]
for all $s\in[0,1]$. Since $t\in(0,1]$ is arbitrary we see that
$(\eta_{n})\bot_{p_{n}}(\gamma_{n})$. Thus property $(SO)$ holds
for $(M_{\omega},d_{\omega})$.
\end{proof}
\begin{lem}
\label{lem:ultralimit-uni-conv}Assume $(M_{n},d_{n},x_{n})$ is a
sequence of uniformly $\infty$-convex metric spaces with same uniformity
function $\rho$ then any ultralimit $\lim_{\omega}(M_{n},d_{n},x_{n})$
is uniformly $\infty$-convex with uniformity function $\rho$.
\end{lem}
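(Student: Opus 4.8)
The plan is to verify the defining inequality of uniform $\infty$-convexity directly inside the ultralimit $M_\omega:=\lim_\omega(M_n,d_n,x_n)$, pushing everything down to the coordinate spaces $M_n$, which are geodesic and uniformly $\infty$-convex with the common modulus $\rho$. So I would fix $X=(x^{(n)})$, $Y=(y^{(n)})$, $Z=(z^{(n)})$ in $M_\omega$, a number $\epsilon>0$ with $d_\omega(Y,Z)>\epsilon R$ where $R:=\max\{d_\omega(X,Y),d_\omega(X,Z)\}$, and a midpoint $M=(m^{(n)})$ of $Y$ and $Z$, aiming to show $d_\omega(X,M)\le(1-\rho(\epsilon))R$. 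Writing $L:=d_\omega(Y,Z)=\lim_\omega d_n(y^{(n)},z^{(n)})$ and $R_n:=\max\{d_n(x^{(n)},y^{(n)}),d_n(x^{(n)},z^{(n)})\}$, one first records that $L,R>0$ (if $R=0$ then $X=Y=Z$, forcing $L=0$) and that, since $L/R>\epsilon$, the set $A:=\{n:d_n(y^{(n)},z^{(n)})>\epsilon R_n\}$ is $\omega$-full, with $R_n>0$ on $A$.

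Next I would replace the given midpoint by honest midpoints of the factors. As each $M_n$ is geodesic, choose for $n\in A$ a genuine midpoint $\mu^{(n)}\in M_n$ of $y^{(n)}$ and $z^{(n)}$ (and $\mu^{(n)}:=x^{(n)}$ otherwise). Applying uniform $\infty$-convexity of $M_n$ with modulus $\rho$ and apex $x^{(n)}$ gives, for $n\in A$,
\[
d_n(x^{(n)},\mu^{(n)})\le(1-\rho(\epsilon))R_n .
\]
The decisive point is the claim that $(\mu^{(n)})$ and $M$ represent the same point of $M_\omega$, i.e. $\lim_\omega d_n(m^{(n)},\mu^{(n)})=0$. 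Granting it, passing to the $\omega$-limit in the displayed inequality yields $d_\omega(X,M)=\lim_\omega d_n(x^{(n)},\mu^{(n)})\le(1-\rho(\epsilon))R$, which is exactly what is wanted, and the lemma follows.

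The claim is where the only genuine difficulty lies: a midpoint of two points in an ultralimit need not be an ultralimit of midpoints of approximating pairs, so the coincidence has to be \emph{earned} from uniform convexity. Suppose $a:=\lim_\omega d_n(m^{(n)},\mu^{(n)})>0$. Put $\tau_n:=\max\{|d_n(y^{(n)},m^{(n)})-\tfrac12 d_n(y^{(n)},z^{(n)})|,\,|d_n(z^{(n)},m^{(n)})-\tfrac12 d_n(y^{(n)},z^{(n)})|\}$, so $\lim_\omega\tau_n=0$ because $M$ is a midpoint of $Y$ and $Z$, and set $r_n:=\tfrac12 d_n(y^{(n)},z^{(n)})+\tau_n$, so $\lim_\omega r_n=\tfrac12 L$. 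On an $\omega$-full subset of $A$ one has simultaneously $d_n(m^{(n)},\mu^{(n)})\ge a/2$ and $r_n\le L$; since each of $d_n(y^{(n)},m^{(n)})$, $d_n(y^{(n)},\mu^{(n)})$, $d_n(z^{(n)},m^{(n)})$, $d_n(z^{(n)},\mu^{(n)})$ is at most $r_n\le L$, the strict inequality $d_n(m^{(n)},\mu^{(n)})\ge a/2>\epsilon_0 r_n$ holds with $\epsilon_0:=a/(4L)>0$. Letting $q^{(n)}\in M_n$ be a genuine midpoint of $m^{(n)}$ and $\mu^{(n)}$ and applying uniform $\infty$-convexity with modulus $\rho$ to the apexes $y^{(n)}$ and $z^{(n)}$, one gets $d_n(y^{(n)},q^{(n)})\le(1-\rho(\epsilon_0))r_n$ and $d_n(z^{(n)},q^{(n)})\le(1-\rho(\epsilon_0))r_n$, whence
\[
d_n(y^{(n)},z^{(n)})\le 2(1-\rho(\epsilon_0))r_n=(1-\rho(\epsilon_0))\bigl(d_n(y^{(n)},z^{(n)})+2\tau_n\bigr),
\]
so that $\rho(\epsilon_0)\,d_n(y^{(n)},z^{(n)})\le 2\tau_n$. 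Taking the $\omega$-limit forces $\rho(\epsilon_0)L\le 0$, contradicting $\rho(\epsilon_0)>0$ and $L>0$; this proves the claim.

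I expect no further obstacle: the verifications that the various sets have $\omega$-measure one and that the cited ultralimits exist are routine, and note that only positivity of $\rho$ is used throughout — no monotonicity or continuity — so the ultralimit inherits the modulus $\rho$ verbatim, as asserted.
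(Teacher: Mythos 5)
Your proposal is correct and follows essentially the same route as the paper: reduce to genuine midpoints $\mu^{(n)}$ in the factors, show the abstract midpoint of the ultralimit coincides with $(\mu^{(n)})$ by a second application of uniform $\infty$-convexity (with apexes $y^{(n)}$, $z^{(n)}$) that contradicts the triangle inequality, and then pass the factorwise convexity estimate at the apex $x^{(n)}$ to the $\omega$-limit. If anything, your explicit choice of the auxiliary convexity parameter $\epsilon_0=a/(4L)$ is slightly more careful than the paper's, which uses $\rho(\delta/2)$ without verifying that the ratio hypothesis holds with that constant.
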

\begin{proof}
Let $(x_{n}),(y_{n})$ and $(z_{n})$ be three points in $(M_{\omega},d_{\omega},x_{\omega})=\lim_{\omega}(M_{n},d_{n},x_{n})$
with 
\[
d_{\omega}((y_{n}),(z_{n}))>\epsilon\max\{d_{\omega}((x_{n}),(y_{n})),d_{\omega}((x_{n}),(z_{n}))\}.
\]
Assume w.l.o.g. $d_{\omega}((x_{n}),(y_{n}))\ge d_{\omega}((x_{n}),(z_{n}))$.
Then $\omega(A)=1$ for 
\[
A=\{n\in\mathbb{N}\,|\,d_{n}(y_{n},z_{n})>\epsilon d_{n}(x_{n},y_{n})\}.
\]
Now let $(w_{n})$ be a midpoint of $(y_{n})$ and $(z_{n})$ (w.r.t.
$d_{\omega}$). Note that $w_{n}$ may not be a midpoint $m_{n}$
of $y_{n}$ and $z_{n}$. 

We claim that $(w_{n})=(m_{n})$ in $M_{\omega}$. Assume by contradiction
this is not the case. Then $d_{\omega}((m_{n}),(w_{n}))>0$ and thus
$d_{\omega}((y_{n}),(z_{n}))>0$. So for some $\delta>0$ it holds
$\omega(B)=1$ where 
\begin{eqnarray*}
B & = & \{n\in A\,|\,d_{n}(m_{n},w_{n}),d_{n}(y_{n},z_{n})\ge\delta,\\
 &  & \qquad d_{n}(y_{n},w_{n}),d_{n}(z_{n},w_{n})\le\frac{1}{2}\left(d_{n}(y_{n},z_{n})+\rho_{0}\right)\}
\end{eqnarray*}
with 
\[
\rho_{0}=\frac{\rho(\nicefrac{\delta}{2})\delta}{2(1-\rho(\nicefrac{\delta}{2}))}>0.
\]
Note that we used the fact that $(w_{n})$ is a midpoint of $(y_{n})$
and $(z_{n})$.

Let $(v_{n})$ be the sequence of midpoints of $m_{n}$ and $w_{n}$.
Then by uniform convexity 
\begin{eqnarray*}
d_{n}(y_{n},v_{n}) & \le & (1-\rho(\nicefrac{\delta}{2}))\max\{d_{n}(y_{n},w_{n}),\frac{1}{2}d(y_{n},z_{n})\}\\
d_{n}(z_{n},v_{n}) & \le & (1-\rho(\nicefrac{\delta}{2}))\max\{d_{n}(z_{n},w_{n}),\frac{1}{2}d(y_{n},z_{n})\}
\end{eqnarray*}
for $n\in B$. But then 
\begin{eqnarray*}
d_{n}(y_{n},v_{n})+d_{n}(z_{n},v_{n}) & \le & \frac{1}{2}(1-\rho(\nicefrac{\delta}{2}))(d_{n}(y_{n},z_{n})+\rho_{0})\\
 & \le & d_{n}(y_{n},z_{n})-\rho(\nicefrac{\delta}{2})(d(y_{n},z_{n})-\frac{\delta}{2})\\
 & < & d_{n}(y_{n},z_{n})
\end{eqnarray*}
for $n\in B$ which contradicts the triangle inequality. Thus it holds
$(m_{n})=(w_{n})$. 

Then uniform convexity implies 
\[
d_{n}(x_{n},m_{n})\ge(1-\rho(\epsilon))d_{n}(x_{n},y_{n})
\]
for $n\in B$ so that 
\[
d_{\omega}((x_{n}),(w_{n}))\ge(1-\rho(\epsilon))d_{\omega}((x_{n}),(y_{n})).
\]
\end{proof}

\begin{proof}
[Proof of Theorem \ref{thm:BusemannSO-CAT0}] Let $(\lambda_{n})_{n\in\mathbb{N}}$
be sequence in $(0,1)$ with $\lambda_{n}\to0$ and choose any ultrafilter
$\omega$ on $\mathbb{N}$. In the following each tangent cone $(T_{x}^{(o)}M,d_{x})$
will denote the ultralimit (with respect to $\omega$) of the space
pointed metric space $(M,d_{n},x)$ where $d_{n}=\lambda_{n}^{-1}d$. 

The assumptions of the theorem imply that each tangent cone $(T_{x}^{(o)}M,d_{x})$
is uniformly $\infty$-convex and hence uniquely geodesic and has
single-valued projections onto closed convex sets. In particular,
they are Busemann convex and satisfy the stable symmetric orthogonality
property $(SO)$. 

Choose any triple $x,y,z\in M$. Let $m$ be the midpoint of $x$
and $y$, $\gamma:[-1,1]\to M$ be the geodesic between $x$ and $y$
and $\eta:[0,1]\to M$ be the geodesic between $m$ and $z$. Define
\begin{align*}
m_{n} & =m\\
x_{n} & =\gamma_{-\lambda_{n}}\\
y_{n} & =\gamma_{\lambda_{n}}\\
z_{n} & =\eta_{\lambda_{n}}.
\end{align*}
Denote the ultralimits of the sequences in $T_{m}^{(o)}M$ by $m_{\infty}$,
$x_{\infty}$, $y_{\infty}$ and $z_{\infty}$ respectively. 

From Busemann convexity and the properties of the geodesics $\gamma$
and $\eta$ we have 
\begin{align*}
\frac{1}{2}d_{n}(x_{n},z_{n})^{2}+\frac{1}{2}d_{n}(y_{n},z_{n})^{2}-d_{n}(m_{n},z_{n})^{2}-\frac{1}{4}d_{n}(x_{n},y_{n})^{2}\\
\le\frac{1}{2}d(x,z)^{2}+\frac{1}{2}d(y,z)^{2}-d(m,z)^{2}-\frac{1}{4}d(x,y)^{2}
\end{align*}
which implies 
\begin{align*}
\frac{1}{2}d_{m}(x_{\infty},z_{\infty})^{2}+\frac{1}{2}d_{m}(y_{\infty},z_{\infty})^{2}-d_{m}(m_{\infty},z_{\infty})^{2}-\frac{1}{4}d_{m}(x_{\infty},y_{\infty})^{2}\\
\le\frac{1}{2}d(x,z)^{2}+\frac{1}{2}d(y,z)^{2}-d(m,z)^{2}-\frac{1}{4}d(x,y)^{2}.
\end{align*}
Observe that the geodesics $\gamma$ and $\eta$ induce an ultraline
$\bar{\gamma}:\mathbb{R}\to T_{m}^{(o)}M$ and an ultraray $\bar{\eta}:[0,\infty)\to T_{m}^{(o)}M$.
Note that $x_{\infty}$, $y_{\infty}$ and $m_{\infty}$ lie on $\bar{\gamma}$
and $z_{\infty}$ and $m_{\infty}$ lie on $\bar{\eta}$. 

Let $\bar{\gamma}^{\pm}$ be the two ultrarays obtained from $\gamma$.
If $\bar{\gamma}^{+}=\bar{\eta}$ or $\bar{\gamma}^{-}=\bar{\eta}$
then $x_{\infty}$, $y_{\infty}$, $m_{\infty}$ and $z_{\infty}$
all lie on $\bar{\gamma}$ so that the one-dimensional parallelogram
identity yields 
\[
\frac{1}{2}d_{m}(x_{\infty},z_{\infty})^{2}+\frac{1}{2}d_{m}(y_{\infty},z_{\infty})^{2}-d_{m}(m_{\infty},z_{\infty})^{2}-\frac{1}{4}d_{m}(x_{\infty},y_{\infty})^{2}=0.
\]

Assume the ray $\bar{\eta}$ is distinct from the rays $\bar{\gamma}^{\pm}$.
Then by Lemma \ref{lem:flat-sector} the convex hulls $C^{\pm}$ of
$\bar{\gamma}^{\pm}([0,\infty))\cup\bar{\eta}([0,\infty))$ are both
flat non-trivial sectors. As both $C^{+}$ and $C^{-}$ are closed
convex subsets of $(T_{m}^{(o)}M,d_{m})$, the geodesic spaces $(C^{\pm},d_{m})$
satisfy the stable symmetric orthogonality property, so that by Corollary
\ref{cor:stableSO} $(C^{\pm},d_{m})$ are both flat Euclidean sectors.
But then Lemma \ref{lem:Euclidean-double-sector} implies that 
\[
\frac{1}{2}d_{m}(x_{\infty},z_{\infty})^{2}+\frac{1}{2}d_{m}(y_{\infty},z_{\infty})^{2}-d_{m}(m_{\infty},z_{\infty})^{2}-\frac{1}{4}d_{m}(x_{\infty},y_{\infty})^{2}\ge0.
\]
 Combined with the inequality above we have shown that 
\[
\frac{1}{2}d(x,z)^{2}+\frac{1}{2}d(y,z)^{2}-d(m,z)^{2}-\frac{1}{4}d(x,y)^{2}\ge0.
\]
As $x,y,z\in M$ are arbitrary, $(M,d)$ must be a $CAT(0)$-space
proving the claim of the theorem.
\end{proof}
\begin{rem*}
(1) A previous version of this note used the result in \cite{Foertsch2010}. 

(2) The proof uses the following observation of Busemann convex spaces:
If $(M,d)$ is Busemann convex and each point has a tangent cone which
is a $CAT(0)$-space then $(M,d)$ itself is a $CAT(0)$-space. A
similar argument holds for non-negatively curved spaces in the sense
of \cite{Kell2016a}, i.e. if $(M,d)$ is Busemann concave and each
tangent cone is non-negatively curved in the sense of Alexandrov then
$(M,d)$ is non-negatively curved in the sense of Alexandrov.
\end{rem*}

\subsection*{Generalizations}

The assumption of uniform $\infty$-convexity can be dropped if it
is possible to show the following.
\begin{problem}
Assume $(M,d)$ is Busemann convex and $\gamma$ and $\eta$ are two
geodesics starting at $x$. Let $\bar{\gamma}$ and $\bar{\eta}$
the corresponding ultrarays in $T_{x}^{(o)}M$. Then there is a (weakly
convex) $2$-dimensional flat sector $C$ containing $\bar{\gamma}$
and $\bar{\eta}$.
\end{problem}
Indeed, $C\times_{2}\mathbb{R}$ is often convex along the ultralimits
of geodesics in $M\times_{2}\mathbb{R}$ (see \cite{Kleiner1999,Foertsch2010})
and the proof of Lemma \ref{lem:ultralimit-so} shows that $(SO)$
holds for those geodesics. Since Lemma \ref{lem:Birkhoff-rigidity}
(see \cite{James1947}) only needs $(SO)$ for the straight lines
it follows that $C\times_{2}\mathbb{R}$ is Euclidean. 

\bibliographystyle{amsalpha}
\bibliography{bib2}

\end{document}